 \newcommand{\R}{\mathbb{R}}
 \newcommand{\Z}{\mathbb{Z}}
\newcommand{\E}{\mathsf{E}}
\renewcommand{\P}{\mathsf{P}}
\newcommand{\Q}{\mathsf{Q}}
 \newcommand{\Px}{\mathsf{P}_{{\bf x}}}
\newcommand{\Py}{\mathsf{P}_{{\bf y}}}
 \newcommand{\bx}{{\bf x}}
 \newcommand{\by}{{\bf y}}
 \newcommand{\bz}{{\bf z}}
\newcommand{\eps}{\varepsilon}
 \newcommand{\be}{{\bf e}}
 \newtheorem{thm}{Theorem}[section]
 \newtheorem{lemma}{Lemma}[section]
 \newtheorem{rmk}{Remark}
\newtheorem{prop}{Proposition}[section]
\newtheorem{dfn}{Definition}[section]
\newtheorem{exam}{Example}[section]
\numberwithin{equation}{section}
\numberwithin{rmk}{section}
\begin{document}

\title{Localisation in a growth model with interaction.  Arbitrary graphs}

\author{
Mikhail Menshikov
\footnote{Department of Mathematical Sciences, Durham University, UK. \newline
\indent  Email: mikhail.menshikov@durham.ac.uk
}\\
{\small  Durham University}
\and 
Vadim Shcherbakov
\footnote{Department of Mathematics, Royal Holloway,  University of London, UK. \newline
\indent  Email: vadim.shcherbakov@rhul.ac.uk
}\\
{\small  Royal Holloway,  University of London}
}

\date{}

\maketitle


\begin{abstract}
{\small 
This paper concerns the long term behaviour of a  growth  model describing  
a random sequential allocation of particles on a finite graph. The 
probability of allocating a particle at a vertex is proportional to a log-linear function of 
numbers of existing  particles in a neighbourhood of  a vertex. When 
 this function depends  only on the number of particles in the vertex, the model becomes 
  a special case of  the generalised P\'olya urn model.
  In this special case all but finitely many particles are allocated at a single random vertex almost surely. 
In our model   interaction leads to the fact 
 that, with probability one, all but finitely many particles are allocated at  vertices 
of a maximal   clique. 
}
\end{abstract}

\noindent {{\bf Keywords:} growth process, cooperative  sequential adsorption, urn models, graph based interaction, maximal clique.}


\section{The model and  main results}

Let $G=(V, E)$ be a non-oriented,  finite  connected graph with vertex set $V$ and  edge set $E$. 
We write  $v\sim u$ to denote that  vertices $v$ and $u$ are  adjacent,
and  $v\nsim u$,  if they are not.
By convention, $v\nsim v$ for all $v\in V$.  
Let $\Z_{+}$ be  the set of all non-negative integers and let $\R$ be the set of real numbers.
Given $\bx=(x_v,\, v\in V)\in\Z_{+}^{V}$ define the growth rates as 
\begin{equation}
\label{rates}
    \Gamma_{v}({\bf x}):=e^{\alpha x_v+\beta\sum_{u\sim v}x_u},\, v\in V,
 \end{equation}
where  $\alpha, \beta\in\R$ are two given constants.
 Consider  a discrete-time Markov chain  $X(n)=(X_{v}(n),\, v\in V)\in\Z_{+}^{V}$ with   the following 
 transition probabilities  
\begin{equation}
\label{trans}
\begin{split}
\P(X(n+1)=X(n)+\be_v|X(n)=\bx)&=\frac{\Gamma_{v}(\bx)}{\Gamma(\bx)},\quad \bx\in\Z_{+}^V,\quad v\in V,\\
\Gamma(\bx)&=\sum_{v\in V}\Gamma_{v}(\bx),
\end{split}
\end{equation}
where  $\be_v\in\Z_{+}^V$ is the $v$-th unit vector and $\Gamma_{v}(\bx)$ is  defined in~\eqref{rates}.

\begin{dfn}
\label{growth}
{\rm 
The Markov chain $X(n)=(X_v(n),\, v\in V)\in \Z_{+}^V$ with transition probabilities~\eqref{trans} 
is called the growth process with parameters $(\alpha, \beta)$ on the graph $G=(V, E)$.
}
\end{dfn}
The growth process $X(n)=(X_v(n),\, v\in V)$ 
describes a random sequential allocation of particles  on the  graph, where 
  $X_v(n)$ is interpreted as the  number of particles at vertex $v$  at time $n$. 
  The growth  process 
  can be regarded as a particular variant of an  interacting  urn model on a graph. The latter is a  probabilistic model 
  obtained from   an urn model by adding 
graph based interaction  (e.g.,~\cite{Benaim} and~\cite{FFK}).
  The growth  process  is motivated by cooperative sequential adsorption model (CSA).  CSA is widely used in physics 
and chemistry for modelling various adsorption processes (\cite{Evans}).
The main peculiarity of adsorption processes
is that adsorbed particles can change adsorption properties of the material. For
instance, the subsequent particles might be more likely adsorbed around the 
locations of previously adsorbed particles.
  In this paper we study the long term behaviour of
     the growth process with positive parameters $\alpha$     and $\beta$.  
Positive parameters generate strong interaction so    that existing particles   increase the growth rates in the neighbourhood of 
        their locations. This results in that, with probability one, all but finitely many particles are allocated at vertices of a maximal  clique (see Definition~\ref{D0} below).   
In a sense, the localisation effect is similar to  localisation phenomena  observed in other random processes with reinforcement (e.g.~\cite{Shapira} and~\cite{Volkov}).

The growth rates  defined in equation~\eqref{rates}  can be generalised as follows
  \begin{equation}
\label{rates11}
    \Gamma_{v}({\bf x})=e^{\alpha_vx_v+\sum_{ u\sim v}\beta_{vu}x_u},\quad v\in V,\quad\bx=(x_u,\, u\in V),  
 \end{equation}
 where $(\alpha_v,\, v\in V)$ and $(\beta_{vu},\, v,u\in V)$ are  arrays of real numbers. 
  Setting 
 $\alpha_v\equiv \alpha$, $\beta_{vu}\equiv \beta$ gives the growth process defined in  Definition~\ref{growth}. 
Originally, the growth process with parameters  $\alpha_v=\beta_{vu}\equiv \lambda\in \R$
 on a cycle  graph $G$  was studied in~\cite{SV10}.
 The limit cases of the model in~\cite{SV10} ($\lambda=\infty$ and $\lambda=-\infty$
 with convention $\infty\cdot 0=0$)  were studied in~\cite{SV10q}.
 The growth process  on a cycle graph $G$ and 
with growth rates given by~\eqref{rates11}, where $\alpha_v=\beta_{vu}=\lambda_v>0$, $v,u\in V$, 
   was studied in~\cite{CMSV}.
Note that if  $\beta=0$ in~\eqref{rates},  then the growth process is a special case of the  generalised P\'olya urn model with exponential weights
(see, e.g. \cite{Davis}).

\bigskip 

We need the following definitions from  the graph theory.  
\begin{dfn}
\label{D0}
{\rm 
Let  $G=(V, E)$ be a finite graph.
\begin{enumerate}
\item[1)]
Given a subset  of vertices $V'\subseteq V$  the corresponding induced subgraph is a  graph  $G'=(V', E')$ 
whose edge set $E'$ consists of all of the edges in $E$ that have both endpoints in $V'$.
 The induced subgraph $G'$ is also known as a subgraph induced by  vertices $v\in V'$.
\item[2)]  
A complete induced subgraph is called a clique.
A maximal clique is a clique  that is  not an induced subgraph of another clique. 
\end{enumerate}
}
\end{dfn}

Theorem~\ref{T1}  below is  the main result of the paper.

\begin{thm}
\label{T1}
Let $X(n)=(X_v(n),\, v\in V)\in \Z_{+}^V$ be a growth process with parameters 
$(\alpha, \beta)$ on  a finite connected graph $G=(V, E)$ with at least two vertices and let $0<\alpha\leq \beta$.
Then for every initial state $X(0)=\bx\in\Z_{+}^V$ with probability one there exists a random maximal
clique  with a vertex set  $U\subseteq V$ such that 
\begin{equation*}
\begin{split}
\lim_{n\to \infty} X_v(n)&=\infty\, \text{  if and only if  } v\in U,\, \text{ and}\\
\lim_{n\to \infty}\frac{X_{v}(n)}{X_{u}(n)}&=e^{C_{vu}},\,\text{ for } \,v,u\in U,
\end{split}
\end{equation*}
where
\begin{align*}
C_{vu}&=\lambda\lim_{n\to \infty}\sum_{w\in V}X_{w}(n)[{\bf 1}_{\{w\sim v, w\nsim u\}}-
{\bf 1}_{\{w\sim u, w\nsim v\}}], \text{ if }\,  0<\lambda:=\alpha=\beta,\, \text{and}\\
C_{vu}&=0,\, \text{ if }\, 0<\alpha<\beta.
\end{align*}
\end{thm}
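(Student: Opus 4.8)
The plan is to work with the log-rates $S_v(n):=\log\Gamma_v(X(n))=\alpha X_v(n)+\beta\sum_{u\sim v}X_u(n)$, so that by~\eqref{trans} the vertex incremented at step $n+1$ is chosen with probability $e^{S_v(n)}/\sum_w e^{S_w(n)}$. Since exactly one particle is added per step, $\sum_v X_v(n)=\sum_v x_v+n\to\infty$; as each coordinate is non-decreasing, the random set $U:=\{v:\ X_v(n)\to\infty\}$ is well defined and non-empty, and every $v\notin U$ is incremented only finitely often, so $X_v(\infty):=\lim_n X_v(n)<\infty$ for $v\notin U$. Everything is then read off from the pairwise differences $D_{vu}(n):=S_v(n)-S_u(n)$: a direct computation from~\eqref{rates} shows that incrementing $w$ changes $D_{vu}$ by $\alpha(\mathbf 1_{\{w=v\}}-\mathbf 1_{\{w=u\}})+\beta(\mathbf 1_{\{w\sim v\}}-\mathbf 1_{\{w\sim u\}})$, and I would analyse this increment separately for adjacent and for non-adjacent pairs.

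First I would show $U$ is a clique. Fix $v\nsim u$ and work on $\{X_v\to\infty,\ X_u\to\infty\}$, where both sites are incremented infinitely often. Here $D_{vu}$ increases by $\alpha$ at each update of $v$, decreases by $\alpha$ at each update of $u$, and changes by $0$ or $\pm\beta$ otherwise; conditioned on the update landing in $\{v,u\}$, it is $v$ with probability $\sigma(D_{vu})$, where $\sigma(t)=(1+e^{-t})^{-1}$. Thus along the (infinite) subsequence of $\{v,u\}$-updates, $D_{vu}$ performs a strongly self-reinforcing $\pm\alpha$ walk whose up-probability tends to $1$ as $D_{vu}\to+\infty$ and to $0$ as $D_{vu}\to-\infty$; such a walk is transient and leaves every bounded set, so $|D_{vu}|\to\infty$ (the effect of the $\pm\beta$ perturbations being controlled as in the last paragraph). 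But if, say, $D_{vu}\to+\infty$, the update probability of $u$ relative to $v$ is $e^{-D_{vu}}\to0$ summably fast, so $u$ is updated only finitely often, contradicting $X_u\to\infty$. Hence $\P(X_v\to\infty,\ X_u\to\infty)=0$ for $v\nsim u$, and $U$ is a.s. a clique. It is maximal: if some $t\notin U$ were adjacent to every vertex of $U$, then for a fixed $u_0\in U$ each clique update changes $S_{u_0}-S_t$ by $\alpha-\beta\le0$ (update at $u_0$) or $0$ (another clique vertex), so $S_{u_0}-S_t$ is eventually non-increasing while $S_{u_0}\to\infty$; then $S_t$ keeps pace with $S_{u_0}$, $t$ is updated infinitely often, and $X_t\to\infty$, contradicting $t\notin U$. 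This proves the first assertion.

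For adjacent $v,u\in U$, the clique structure makes the increment of $D_{vu}$ at a clique update equal to $(\alpha-\beta)(\mathbf 1_{\{w=v\}}-\mathbf 1_{\{w=u\}})$, and it yields the identity $D_{vu}(n)=(\alpha-\beta)\big(X_v(n)-X_u(n)\big)+\beta\sum_{w\notin U}X_w(n)\big[\mathbf 1_{\{w\sim v,\,w\nsim u\}}-\mathbf 1_{\{w\sim u,\,w\nsim v\}}\big]$, in which the last sum is eventually constant because every $X_w$ with $w\notin U$ is. If $0<\alpha<\beta$, the clique updates give $D_{vu}$ a restoring (negative-feedback) drift, so $D_{vu}$, and hence $X_v-X_u$, stay bounded; as both coordinates diverge, $X_v(n)/X_u(n)\to1$, i.e.\ $C_{vu}=0$. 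If $\alpha=\beta=\lambda$, every clique update leaves $D_{vu}$ unchanged, so once the finitely many external updates have ceased $D_{vu}(n)$ is constant and equal to $C_{vu}:=\lim_n D_{vu}(n)=\lambda\lim_n\sum_{w\notin U}X_w(n)\big[\mathbf 1_{\{w\sim v,\,w\nsim u\}}-\mathbf 1_{\{w\sim u,\,w\nsim v\}}\big]$ (the internal contributions cancelling), which is the constant displayed in the statement. Moreover each clique update raises every $S_{w'}$, $w'\in U$, by the same $\lambda$, so the conditional law of the updated vertex within $U$ is the fixed distribution proportional to $(e^{S_{w'}})_{w'\in U}$; the clique then fills as an i.i.d.\ multinomial allocation, the strong law gives $X_{w'}(n)/n\to p_{w'}\propto e^{S_{w'}}$, and therefore $X_v(n)/X_u(n)\to p_v/p_u=e^{D_{vu}}=e^{C_{vu}}$.

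The step I expect to be hardest is the escape argument for a non-adjacent pair: making rigorous that the self-reinforcing $\pm\alpha$ drift of $D_{vu}$ dominates, uniformly in time, the $\pm\beta$ perturbations generated by the rest of the (connected) graph, so that the lagging site is updated only finitely often. I would control this by a conditional Borel--Cantelli estimate — on $\{X_v\to\infty,\ X_u\to\infty\}$ both sites are updated infinitely often, whereas once $D_{vu}$ climbs above a high level the summability of $\sum_n e^{-D_{vu}(n)}$ permits only finitely many further $u$-updates — supplemented by an exponential supermartingale of the form $e^{-\theta D_{vu}(n)}$ guaranteeing that $D_{vu}$ indeed leaves every bounded set despite the cross-terms.
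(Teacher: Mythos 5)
Your route is genuinely different from the paper's (which never analyses the almost-sure behaviour pairwise: it proves, via the $D$-set partition, the measure $\Q_{\bx,n}$ and Proposition~\ref{P11}, a lower bound $\Px(A^{(v_1,\dots,v_m)}_{[1,\infty]})\geq\eps$ \emph{uniform in the state} for a final maximal clique, and then concludes by renewal). But your key step --- excluding that two non-adjacent vertices $v\nsim u$ both diverge --- has a genuine gap, and you have correctly identified where it is without closing it. The difficulty is that the $\pm\beta$ perturbations of $D_{vu}(n)$ are not small noise: they have magnitude $\beta\geq\alpha$, i.e.\ at least as large as the self-reinforcing $\pm\alpha$ steps, and they are produced by \emph{other diverging vertices} (any $w$ with $X_w\to\infty$, $w\sim u$, $w\nsim v$ kicks $D_{vu}$ down by $\beta$ infinitely often), so the embedded $\{v,u\}$-walk comparison does not stand on its own. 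Your proposed fix fails on both prongs: $e^{-\theta D_{vu}(n)}$ is \emph{not} a supermartingale --- at a state where the update probability concentrates on a single $z\sim u$, $z\nsim v$ one has $\E\bigl(e^{-\theta(D_{vu}(n+1)-D_{vu}(n))}\bigm|\mathcal{F}_n\bigr)\approx e^{\theta\beta}>1$ --- and the Borel--Cantelli half has a quantitative leap: $D_{vu}(n)\to+\infty$ does not imply $\sum_n e^{-D_{vu}(n)}<\infty$ (growth like $\log\log n$ defeats it), so ``summably fast'' needs a rate you have not established. Ruling out the scenario in which third-vertex kicks conspire to keep $D_{vu}$ oscillating near $0$ while both $v$ and $u$ are updated infinitely often is exactly the hard core of the theorem; the paper's uniform-$\eps$-plus-renewal scheme is designed to avoid having to prove any such almost-sure pairwise statement directly.

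Two secondary points. In the case $0<\alpha<\beta$ your claim that the restoring drift makes $D_{vu}$, hence $X_v-X_u$, ``stay bounded'' is false as stated: the correct statement is positive recurrence of the difference chain (the paper's Lemma~\ref{MC_Z}, proved by Foster's criterion with $f(\bz)=\sum_i z_i^2$), and boundedness fails for any irreducible recurrent chain on $\Z^{m-1}$; to get $X_v(n)-X_u(n)=o(n)$ one still needs the excursion/Borel--Cantelli control of Lemma~\ref{L3}, using $\E(\sigma_1)<\infty$. In the case $\alpha=\beta$, your phrase ``the clique then fills as an i.i.d.\ multinomial allocation'' glosses a real subtlety: conditioning on the \emph{future} event $A^{(v_1,\dots,v_m)}_{[1,\infty]}$ tilts the law of the within-clique choices, because the probability of staying in the clique depends on which clique vertices were chosen (through the rates of outside vertices). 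The paper handles this by the one-step identity $\Px\bigl(A_n^{v_{k(n)}}\bigm|A_n^{(v_1,\dots,v_m)},\cap_i A_i^{v_{k(i)}}\bigr)=p_{k(n)}$, deducing the path bound $\Px(\cap_j A_j^{v_{k(j)}})\leq p_1^{n_1}\cdots p_m^{n_m}$ and dominating $\Px(B_\delta\cap A^{(v_1,\dots,v_m)}_{[1,\infty]})$ by the i.i.d.\ probability $\widetilde\P(\widetilde B_\delta)=0$; a martingale SLLN with the $\mathcal{F}_n$-measurable probabilities would also repair your version. These two items are fixable along the paper's lines; your maximality argument via L\'evy's extension of Borel--Cantelli is sound, and external updates ceasing is automatic from the definition of $U$ --- but the non-adjacent-pair step is a missing proof, not a missing detail.
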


\begin{rmk}
{\rm 
In other words,
Theorem~\ref{T1} states that, with probability one, starting from a finite 
random time moment  {\it all} subsequent particles 
are allocated at a random  maximal clique. This is what we call localisation of the growth process.
Note that quantities $C_{vu}$
are  {\it random} and   depend on the state of the   process  at the time moment, when localisation starts at the maximal clique.
}
\end{rmk}
\begin{exam}
{\rm 
In Figure~\ref{figure}  we provide an example of a connected graph, where  a growth process with parameters 
$0<\alpha\leq \beta$ can localise  in five possible ways.
The graph has  eight vertices labeled by numbers $1,2,3,4,5,6,7$ and $8$. There are five  maximal cliques induced by 
vertex sets $\{1,2\}$, $\{2,7\}$, $\{4,8\}$, $\{7,8\}$, $\{4,5,6\}$ and $\{2,3,4,5\}$ respectively. 
By Theorem~\ref{T1}, a growth process  with parameters  $0<\alpha\leq \beta$ can localise at any of these maximal  cliques 
with positive probability, and no other limiting behaviour is possible. 
}
\end{exam}

\begin{figure}[H]
  \centering
  \includegraphics[scale=0.9]{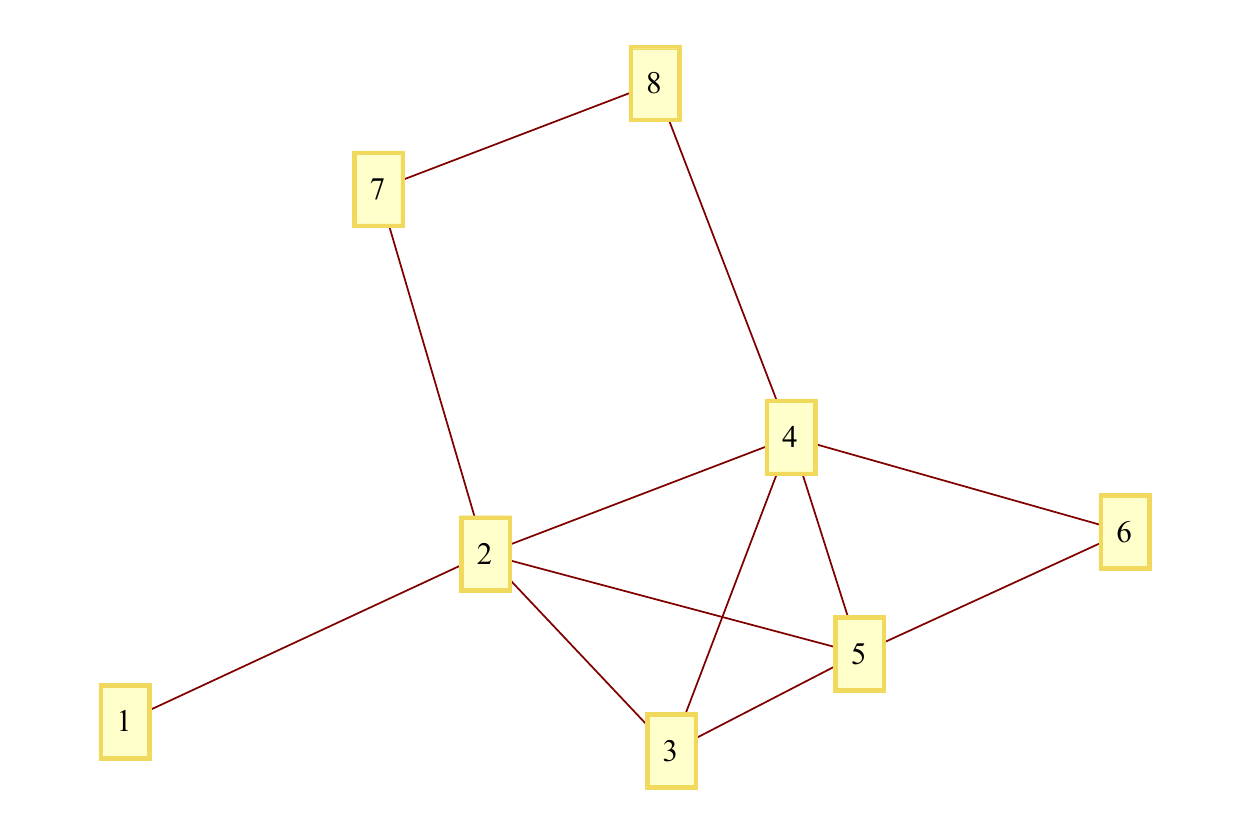}
  \vspace*{-4mm}
  \caption{\small{Graph with five cliques}}
  \label{figure}
\end{figure}

For completeness, we state and prove the following result concerning the limit behaviour of the growth process 
in the case $0<\beta<\alpha$.

\begin{thm}
\label{T2}
Let  $X(n)=(X_v(n),\, v\in V)\in \Z_{+}^V$ be a growth process with parameters 
$(\alpha, \beta)$ on a finite  connected graph $G=(V, E)$ and let $0<\beta<\alpha$. Then 
  for every  initial state $X(0)=\bx\in\Z_{+}^V$ with  probability one there exists 
  a random vertex $v$ such that 
  $$\lim_{n\to \infty} X_u(n)=\infty \text{   if and only if   } u=v.$$
 In other words, with probability one, all but a finite number of particles 
are allocated at a single random vertex.
\end{thm}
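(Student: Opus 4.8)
The plan is to reduce the statement to showing that, almost surely, exactly one vertex is incremented infinitely often, and then to control the competition between vertices through the log-rates rather than the raw counts. For the reduction, note that each coordinate $X_v(n)$ is non-decreasing in $n$, so for every $v$ either $X_v(n)\to\infty$ or $X_v(n)$ is eventually constant, the latter happening precisely when $v$ is incremented only finitely often. Since exactly one particle is added per step, $\sum_{v\in V}X_v(n)=\sum_{v\in V}x_v+n\to\infty$, and as $V$ is finite at least one vertex is incremented infinitely often. Hence it suffices to prove that, with probability one, no two distinct vertices are incremented infinitely often.

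The key device is the vector of log-rates $Y_v(n):=\alpha X_v(n)+\beta\sum_{u\sim v}X_u(n)=\log\Gamma_v(X(n))$, so that the conditional probability of incrementing $v$ at step $n+1$ is $e^{Y_v(n)}/\sum_{w}e^{Y_w(n)}$. When a vertex $w$ is incremented, $Y_w$ increases by exactly $\alpha$, each $Y_u$ with $u\sim w$ increases by exactly $\beta$, and all other coordinates are unchanged. I would track the gap
\[
g(n):=Y_{(1)}(n)-Y_{(2)}(n)\ge 0
\]
between the largest and second largest log-rate. The crucial observation, and the place where $0<\beta<\alpha$ enters, is that if the current leader is incremented then its coordinate gains $\alpha$ while every other coordinate gains at most $\beta$, so $g$ increases by at least $\alpha-\beta>0$ and the leader is unchanged; whereas an arbitrary increment can decrease $g$ by at most $\alpha$, since all coordinates are non-decreasing and each moves by at most $\alpha$. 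Moreover the conditional probability that some non-leader is incremented is at most $(|V|-1)e^{-g(n)}$. Combining these gives the drift estimate $\Ee[g(n+1)-g(n)\mid\mathcal F_n]\ge(\alpha-\beta)-(2\alpha-\beta)(|V|-1)e^{-g(n)}$, which is bounded below by a positive constant as soon as $g(n)$ exceeds a fixed level $K_0$.

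From here I would argue transience of the gap. Using the exponential supermartingale $e^{-\theta g(n)}$ for a small $\theta>0$, which is a supermartingale on the region $\{g\ge K_0\}$ by virtue of the bounded increments together with the positive drift, a standard optional-stopping and excursion argument shows that $g(n)\to\infty$ almost surely and that $\{g<K_0\}$ is visited only finitely often. Once $g$ grows, the inequality $g(n)\ge g(0)+(\alpha-\beta)\,(\text{number of leader steps})-\alpha\,(\text{number of non-leader steps})$ forces $g(n)$ to grow linearly along any trajectory on which non-leader steps are sparse, so $\sum_n e^{-g(n)}<\infty$; the conditional Borel--Cantelli lemma then yields that only finitely many non-leader increments occur almost surely. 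After the last such increment a single vertex $v$ remains the leader forever and is the only vertex incremented thereafter, so $X_v(n)\to\infty$ while every other $X_u(n)$ is eventually constant. This is exactly the assertion of the theorem, with $v$ random.

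I expect the main obstacle to be making the transience step fully rigorous, because the tail bound $(|V|-1)e^{-g(n)}$ on non-leader increments and the lower bound on $g(n)$ are mutually dependent: one must rule out the scenario in which the two leading log-rates grow together with a bounded gap. I would handle this by a bootstrap based on the stopping time $\tau_H=\inf\{n:g(n)\ge H\}$ for a large threshold $H$, first showing that $g$ reaches every level so that $\tau_H<\infty$ almost surely, and then that from $\tau_H$ onward the probability of ever accumulating enough non-leader steps to bring $g$ back below $K_0$ is summable in $H$. This closes the circular dependence and upgrades the high-probability statements to probability one.
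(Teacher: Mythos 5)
Your proposal is correct in outline, but it takes a genuinely different route from the paper. The paper proves Theorem~\ref{T2} in two short steps: Proposition~\ref{L4} shows by a direct product computation that, from \emph{any} state $\bx$, all subsequent particles land at a vertex $u$ of maximal growth rate with probability at least $\eps=\prod_{n=0}^{\infty}\bigl(1+|V|e^{-(\alpha-\beta)n}\bigr)^{-1}>0$, uniformly in $\bx$ (if every particle goes to $u$, then $\Gamma_u$ gains a factor $e^{\alpha}$ per step while every other rate gains at most $e^{\beta}$, and $\Gamma_u(\bx)\geq\Gamma_v(\bx)$ kills the ratio sum); the same renewal argument as for Theorem~\ref{T1} then finishes, since each time a particle lands outside the current max-rate vertex the scheme restarts, and each restart fails to be final with probability at most $1-\eps$. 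You instead run a Lyapunov/drift analysis of the gap $g(n)$ between the two largest log-rates. Your elementary estimates all check out: a leader step adds at least $\alpha-\beta$ to $g$ and keeps the leader; since all coordinates are non-decreasing and each moves by at most $\alpha$, the new maximum is at least the old maximum and the new second maximum is at most the old second maximum plus $\alpha$, so $g$ drops by at most $\alpha$; the non-leader probability is at most $(|V|-1)e^{-g(n)}$; and the drift bound $(\alpha-\beta)-(2\alpha-\beta)(|V|-1)e^{-g(n)}$ follows. Your approach buys more quantitative information --- almost-sure linear growth of the gap, hence exponentially decaying misplacement probabilities --- and fits the standard Lyapunov machinery of \cite{MPW}, which the paper itself cites for Lemma~\ref{MC_Z}; the paper's approach buys brevity and an explicit uniform constant, and sidesteps all transience bookkeeping.

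Two details in your sketch are where the real work lies. First, escape from $\{g<K_0\}$: the supermartingale controls only excursions started above $K_0$, so you should add that the leader is incremented with conditional probability at least $1/|V|$ at every step; hence runs of $\lceil (K_0+\alpha)/(\alpha-\beta)\rceil$ consecutive leader steps have uniformly positive conditional probability and occur infinitely often by L\'evy's conditional Borel--Cantelli lemma, giving $\tau_H<\infty$ a.s., and optional stopping of $e^{-\theta g}$ bounds the probability of a return below $K_0$ after an up-crossing by $e^{-\theta\alpha}<1$, so $\{g<K_0\}$ is visited finitely often. Second, your bootstrap for $\sum_n e^{-g(n)}<\infty$ can be shortcut and the circularity avoided: once $g(n)\geq K_0$ for all large $n$, write $g(n)$ as its compensator plus a martingale with increments bounded by $\alpha$; the compensator grows at rate at least $c>0$ by your drift bound while the martingale is $o(n)$ by Azuma--Hoeffding, so $g(n)\geq cn/2$ eventually, which gives summability directly, and the conditional Borel--Cantelli lemma then yields finitely many non-leader increments, as you say. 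Neither point is a gap in substance, but both must be made explicit for the proof to close.
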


\begin{rmk}
{\rm
It is noted above, that if $\beta=0$, i.e. in the absence of interaction, our model  becomes a special case of the generalised P\'olya urn model, 
where  a particle is allocated at  a vertex $v$   with probability proportional to $e^{\alpha x_v}$, 
if the process is at  state $\bx=(x_v,\, v\in V)\in\Z_{+}^V$. In this case
 all but a finite number of particles are  allocated at a random single vertex with probability one, if $\alpha>0$. 
 Note that this particular result follows from a well known more general result for the generalised P\'olya urn model
 (\cite{Davis}).
 The attractive interaction introduced in our model 
 by a positive parameter $\beta$ leads to the fact that the growth process localises at a maximal clique rather than at a single vertex.
}
\end{rmk}
\begin{rmk}
{\rm 
In~\cite{JSV19} and~\cite{SV15} the  long term behaviour  of a continuous time Markov chain (CTMC)  $\xi(t)\in\Z_{+}^V$, where 
$V$ is  vertex set of a finite graph $G(V, E)$,   was studied.
Given state $\bx=\xi(t)\in\Z_{+}^V$ a component $\xi_v(t)$ of the Markov chain increases by one  with the rate equal to the growth  rate 
$\Gamma_v(\bx)$ defined 
in~\eqref{rates}, and a non-zero component decreases by one with the unit rate. 
Both papers~\cite{JSV19} and~\cite{SV15} were mostly concerned with classification of the long term behaviour of the Markov chain, namely, 
whether the Markov chain is recurrent or transient depending on both the  parameters $\alpha, \beta$ and the graph $G$.
The typical asymptotic behaviour of the Markov chain was studied in~\cite{SV15}  in some transient  cases. 
First of all,  it was shown in~\cite{SV15} that if both $\alpha>0$ and $\beta>0$, then, with probability one, there is a random finite time 
after   which none of the components  of CTMC $\xi(t)$  decreases.
 In other words,  with probability one,  the corresponding discrete time  Markov chain (known also as the embedded  Markov chain)  asymptotically 
 evolves  as the growth process with parameters $(\alpha, \beta)$.
Further,  if $0<\beta<\alpha$, then,   with probability one, a single component of CTMC $\xi(t)$  explodes. Theorem~\ref{T2}  above is
basically the same  result formulated in terms of the growth process.
Another result of paper~\cite{SV15} is that if $0<\alpha\leq \beta$ and the  graph $G$ is connected, has at least two vertices and  
 does not have  cliques of size more than $2$, then, with probability one,  only a pair of adjacent components of the Markov chain explodes.
Theorem~\ref{T1} in the present paper  yields the following generalisation of this result on the case of arbitrary graphs. 
Namely,   if $0<\alpha\leq \beta$, then, with probability one, only a group of CTMC $\xi(t)$   components labeled by vertices of a maximal 
clique explodes.   }
\end{rmk}

\begin{rmk}
{\rm 
Note also that in the case of a cycle graph and $\alpha=\beta>0$ localisation of the growth process at a pair of 
adjacent vertices  was 
previously shown   in~\cite[Theorem 3]{SV10} and~\cite[Theorem 1]{CMSV}.
}
\end{rmk}

Let us briefly comment on proofs of Theorems~\ref{T1} and~\ref{T2}. 
In both cases, given any initial state $X(0)=\bx$ we identify special events  that 
result in localisation of the growth process as described in the theorems.
We  show that the probability of any  event of interest is  bounded below uniformly over initial configurations.
Then  it follows from a renewal argument that almost surely 
 one of these  events eventually occurs.
Note that the same renewal argument   was used in~\cite{CMSV}.
  
In the case of  Theorem~\ref{T2}  we show by a direct computation that  given any initial state $X(0)=\bx$,
 with positive probability (depending only on the model parameters),
 all particles will be  allocated at a single vertex with the maximal growth rate.

 In the case of  Theorem~\ref{T1},  we start with detecting a maximal clique,  where  the growth process can potentially localise.
To this end, we use  a special algorithm explained  in Section~\ref{max}. Given 
any initial state $X(0)=\bx$ the algorithm outputs  a maximal 
 clique satisfying certain conditions (we call it final maximal clique, see Section~\ref{max}). 
The key step in the proof is to obtain a uniform lower bound for the probability  that  all particles 
are allocated at  vertices of a final  maximal  clique (Lemma~\ref{L1} below).
Given that all particles are allocated at vertices of a maximal 
 clique we show that  the pairwise ratios of numbers of allocated particles at the clique vertices 
converge, as claimed in Theorem~\ref{T1}.
If $\alpha=\beta$, then  convergence of the ratios 
 follows from  the strong law of large numbers for the i.i.d. case and a certain stochastic dominance argument.
If $\alpha<\beta$, then for complete graphs convergence of the ratios follows from 
a strong law of  large numbers for these graphs  (Lemma~\ref{L3}). In the case $\alpha<\beta$ and arbitrary graphs
the convergence of ratios follows from the result for complete graphs 
combined with the  stochastic dominance argument.


The rest of the paper is organised as follows. 
 In Section~\ref{prelim}, we introduce notations  and give definitions used in the proofs.
The proof of Theorem~\ref{T1} appears in Section~\ref{proofT1}, and 
Theorem~\ref{T2} is proved in Section~\ref{proofT2}.

\section{Preliminaries}
\label{prelim}

\subsection{Partition of the graph} 

Let $G=(V, E)$ be a finite connected  graph with at least two vertices. Let $G(v_1,...,v_m)$  denote a subgraph induced by vertices $v_1,...,v_m$.

\begin{dfn}
\label{D1}($D$-{\rm sets}.)
{\rm 
Let  $(v_1,...,v_{m})\subseteq V$ be an ordered subset of vertices and let subgraph 
$G(v_1,...,v_m)$ be a maximal  clique. Define the following subsets of vertices   $D_{v_1},..., D_{v_m}$  
\begin{enumerate}
\item[1)] $D_{v_1}=\{v\in V: v\nsim v_1  \text{ and } v\neq v_1\}$ and
\item[2)] $D_{v_k}=\{v\in V: v\nsim v_k,\, v\neq v_k  \text{ and }  v\sim v_1,..., v_{k-1}\}$ for   $2\leq k\leq m.$
\end{enumerate}
}
\end{dfn}

It follows from the definition of  $D$-sets that
\begin{align}
\label{Da}
\{v_1,\ldots,v_{m}\}&\cap D_{v_k}=\emptyset,\, k=1,..,m,\\
D_{v_k}&\cap D_{v_j}=\emptyset,\,v_k\neq v_j\, \text{  for }\, v_k, v_j\in\{v_1,...,v_{m}\},\label{Db}\\
V&=\{v_1,\ldots,v_{m}\}\cup D_{v_1}\cup\ldots\cup D_{v_{m}}.\label{Dc}
\end{align}

\begin{exam}
{\rm 
It should be noted that a $D$-set can  be empty. 
For instance,  let $G$ be the graph in Figure~\ref{figure}. Consider the clique with ordered set of vertices $(v_1=1, v_2=2)$,
i.e.
$G(1,2)$. Then $D_{v_1}:=D_{1}=\{3,4,5,6,7,8\}$ and $D_{v_2}:=D_2=\emptyset$. On the other hand, for the  clique 
$G(v_1, v_2):=G(2,1)$, i.e. the clique with the reverse order of vertices, we have  that $D_{v_1}=:D_2=\{6,8\}$ and $D_{v_2}:=D_1=\{3,5,4,7\}$.


}
\end{exam}

\subsection{Measure $\Q_{{\bf x}, n}$ }
In this section we introduce an auxiliary probability measure associated with the growth process. This measure naturally appears in the proof of
Lemma~\ref{L1} below and  plays an important role in the proof.

Let  $v_1,...,v_m$ be an ordered set of vertices such that the induced graph $G(v_1,...,v_m)$ is a maximal clique 
and let $D_{v_1},...,D_{v_m}$ be  the corresponding $D$-sets. 
Define
\begin{equation}
\label{Vk}
V_k:=\{v_k\}\cup D_{v_k},\, k=1,...,m.
\end{equation}

Given $i\in \{1,...,m\}$ define the following events
\begin{align}
\label{Event21}
A_{n}^{v_i}&=\{\text{at time }\,  n\, \text{  a particle is placed at site }\, v_i\},\,  n\geq 1,\\
A_{n}^{V_i}&=\{\text{at time }\,  n\, \text{  a particle is placed at site }\, v\in V_i\},\,  n\geq 1.
\label{Event22}
\end{align}
Let 
$\Px(\cdot) = \P(\cdot|X(0)=\bx)$ denote the distribution of the growth process started at $\bx\in \Z_{+}^V$.
Define  the following set of vertex sequences 
\begin{equation}
\label{Sn}
S(n)=\{(k(1),...,k(n)): k(i)\in (1,...,m),\, i=1,...,n\},\,  n\geq 1.
\end{equation}
A sequence $(k(1),...,k(n))\in S(n)$ corresponds to an event, where 
 a particle is allocated at vertex $v_{k(i)}\in (v_1,...,v_m)$ at time $i$, $i=1,...,n$. 
\begin{rmk}
{\rm 
 Note that a sequence $(v_{k(1)},...,v_{k(n)})\in S(n)$ uniquely determines  a path 
    $\bx(1),...,\bx(n)$  of length $n$ of the growth process, where 
$$\bx(j)=\bx+\sum_{i=1}^j\be_{v_{k(i)}},\, j=1,...,n.$$
}
\end{rmk}
It is easy to see   that for each  $(v_{k(1)},...,v_{k(n)})\in S(n)$
\begin{equation}
\label{aux00}
\Px\left(A_{j+1}^{V_{k(j+1)}}\Biggl|
\bigcap_{i=1}^{j}A^{v_{k(i)}}_{i}\right)=\P_{\bx+\sum_{i=1}^j\be_{v_{k(i)}}}\left(A_{1}^{V_{k(j+1)}}\right), \,\,  j=0,...,n-1.
\end{equation}
Let $\Q_{{\bf x},n}$ denote a measure on $S(n)$ defined as follows
\begin{align}
\label{aux}
\Q_{{\bf x},n}((v_{k(1)},...,v_{k(n)}))&=\Px\left(A^{V_{k(1)}}_1\right)\prod_{j=1}^{n-1}
\P_{\bx+\sum_{i=1}^j\be_{v_{k(i)}}}\left(A_{1}^{V_{k(j+1)}}\right).
\end{align}

It follows from equations~\eqref{Da}-\eqref{Vk} 
that  $V_k,\, k=1,...,m$, is a partition of the vertex set $V$ of the graph. In turn, this fact implies the following 
 proposition. 
\begin{prop}
\label{PQ} 
$\Q_{{\bf x},n}$ is a probability measure on $S(n)$, that is 
\begin{equation}
\sum_{(v_{k(1)},...,v_{k(n)})\in S(n)} \Q_{{\bf x},n}((v_{k(1)},...,v_{k(n)}))=1,
\end{equation}
where the sum is taken over all elements of  $S(n)$.
\end{prop}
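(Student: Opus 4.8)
The plan is to exploit the structural fact recorded immediately before the statement, namely that $V_1,\dots,V_m$ form a partition of $V$. The definitions~\eqref{Da}--\eqref{Vk} guarantee that the sets $V_k=\{v_k\}\cup D_{v_k}$ are pairwise disjoint and cover $V$. Since at each time step exactly one particle is allocated at exactly one vertex, the events $A_1^{V_1},\dots,A_1^{V_m}$ defined in~\eqref{Event22} are mutually exclusive and exhaustive, whatever the starting configuration. Hence I would first establish the normalisation identity
\begin{equation}
\label{normid}
\sum_{k=1}^m \P_{\by}\left(A_1^{V_k}\right)=1,\qquad \by\in\Z_{+}^V,
\end{equation}
valid uniformly over all initial states $\by$. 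This is the only structural input the argument needs.

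The remainder is an induction on $n$, peeling off the last factor in the product~\eqref{aux}. For the base case $n=1$ the empty product equals one, so $\Q_{\bx,1}((v_k))=\Px(A_1^{V_k})$, and summing over $k=1,\dots,m$ gives $1$ by~\eqref{normid} with $\by=\bx$. For the inductive step I would split a sequence $(v_{k(1)},\dots,v_{k(n)})\in S(n)$ into its first $n-1$ entries and its last entry $k(n)$. In the product defining $\Q_{\bx,n}$ only the final factor
$$\P_{\bx+\sum_{i=1}^{n-1}\be_{v_{k(i)}}}\left(A_1^{V_{k(n)}}\right)$$
depends on $k(n)$, so the total sum factorises: fixing $(k(1),\dots,k(n-1))$ and summing over $k(n)$ produces a factor $\sum_{k(n)=1}^m \P_{\bx+\sum_{i=1}^{n-1}\be_{v_{k(i)}}}(A_1^{V_{k(n)}})=1$ by~\eqref{normid} applied to $\by=\bx+\sum_{i=1}^{n-1}\be_{v_{k(i)}}$. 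What remains is exactly $\sum_{(v_{k(1)},\dots,v_{k(n-1)})\in S(n-1)}\Q_{\bx,n-1}(\cdot)$, which equals $1$ by the inductive hypothesis.

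Because~\eqref{normid} holds at \emph{every} starting state, the peeling argument closes without any compatibility condition linking different values of $\bx$; this uniformity is the one mild point to get right, and it is precisely what the partition property of the $V_k$ supplies. I do not expect any genuine obstacle here: the content of the proposition is simply that the one-step transition probabilities, grouped according to the partition $\{V_k\}$, sum to one at each state, and the product form of $\Q_{\bx,n}$ then telescopes. An equivalent route avoids explicit induction by reading~\eqref{aux} through~\eqref{aux00} as a chain of conditional probabilities for the coarse-grained process that records only which block $V_k$ is hit at each step, so that the total mass is one by the tower property; I would nevertheless present the induction, as it is the most self-contained.
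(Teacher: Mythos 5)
Your proof is correct and follows essentially the same route as the paper: the paper gives no separate proof but derives the proposition directly from the observation (stated just before it) that $V_1,\dots,V_m$ partition $V$, so that $\sum_{k=1}^m \P_{\by}\bigl(A_1^{V_k}\bigr)=1$ at every state $\by$ and the product in~\eqref{aux} telescopes. Your induction merely writes out in full the peeling argument the paper leaves implicit, so there is nothing to correct.
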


\subsection{Final maximal clique}  
\label{max}

For every initial state $X(0)=\bx$ we detect 
 a maximal clique, where the growth process can potentially localise, by using an algorithm described below.
Denote for short $\Gamma_v=\Gamma_v(\bx),\, v\in V$.

\begin{itemize}
\item {\it Step 1}.
Let  $v_1$ be a vertex such that $\Gamma_{v_1}=\max(\Gamma_v: v\in V)$. 
If there are several vertices with the maximal growth rate, then choose any of these vertices arbitrary.

\item {\it Step 2}. 
Given vertex $v_1$ with the maximal growth rate, let  $v_2$ be a vertex such that  
 $ \Gamma_{v_2}=\max(\Gamma_{v}: v\sim v_1).$
If there is more than one such  vertex, then choose any of them arbitrarily. 
By construction, a subgraph $G(v_1, v_2)$ induced by vertices $v_1$ and $v_2$  is complete and $\Gamma_{v_1}\geq \Gamma_{v_2}$.
 If  $G(v_1, v_2)$ is a maximal clique,  then the algorithm terminates and outputs the maximal   clique   $G(v_1, v_2)$.
 Otherwise, the algorithm  continues.
\item {\it General step.}
Having selected  vertices $v_1,..., v_k$ such that a subgraph $G(v_1, v_2, \ldots, v_k)$ induced 
by these vertices is complete and  $\Gamma_{v_1}\geq \Gamma_{v_2}\geq\ldots\geq\Gamma_{v_k}$, proceed as follows.
If  $G(v_1, v_2, \ldots, v_k)$ is a maximal clique,
 then 
the algorithm terminates  and outputs the  maximal  clique  $G(v_1, v_2, \ldots, v_k).$ If  $G(v_1, v_2, \ldots, v_k)$ is not a maximal clique, then 
select  a vertex $v_{k+1}$ such that  
$\Gamma_{v_{k+1}}=\max\left(\Gamma_{v}:  v\sim v_j,\, j=1,\ldots, k\right).$
If  there is  more than one  such  vertex, then choose any of them arbitrary. 
In other words,  at this step of the algorithm, we select a vertex $v_{k+1}$ such that  
$v_{k+1}\sim v_j$, $j=1,...,k$, and $\Gamma_{v_1}\geq \Gamma_{v_2}\geq\ldots\geq\Gamma_{v_k}\geq \Gamma_{v_{k+1}}$.
Having selected $v_{k+1}$ repeat the general step with  complete subgraph  $G(v_1, \ldots, v_k,  v_{k+1})$.
\end{itemize}
\begin{dfn}
\label{max_clique}
{\rm 
  Given state $\bx\in\Z_{+}^V$  with growth rates $\Gamma_v=\Gamma_v(\bx),\, v\in V$, 
 a maximal clique $G(v_1,...,v_m)$ obtained by the algorithm  above  is called 
 a final  maximal clique for state $\bx$.
} 
\end{dfn}

Let $G(v_1,...,v_m)$ be a final maximal clique for state $\bx$.
Then 
\begin{align}
\label{maxclique_properties1}
&\Gamma_{v_1}=\max(\Gamma_v: v\in V); \\
& \Gamma_{v_1}\geq\ldots\geq \Gamma_{v_m};\label{maxclique_properties12}\\
&\Gamma_{v_{k+1}}=\max\left(\Gamma_{v}:  v\sim v_j,\, j=1,\ldots, k\right),\, k=1,...,m-1.
\label{maxclique_properties13}
\end{align}

\begin{exam}
{\rm 
 Let $G$ be the graph in Figure~\ref{figure}.  In this case, if the growth rates are such that  vertices  $5$ and $6$ are
  chosen at the first  and the second  step of the detection algorithm respectively, then the algorithm outputs final  maximal clique $G(5,6,4)$.  
}
\end{exam}

\begin{prop}
\label{P11}
Let subgraph $G(v_1,...,v_m)$ be  a final maximal clique for state $\bx\in \Z_{+}^V$ and let $D_{v_i},\,i=1,...,m$, be the corresponding $D$-sets. 
Let $(v_{k(1)},...,v_{k(n)})\in S(n)$ be such   that 
$r$ are particles allocated at
 vertex $v_{k(n)}$ during the time interval $[1, n-1]$.
Then
\begin{equation}
\label{P11E1}
\Px\left(A^{v_{k(n)}}_{n}\Biggl|A^{V_{k(n)}}_{n},\bigcap_{i=1}^{n-1}A^{v_{k(i)}}_{i}\right)
\geq 
\frac{1}{1+|V|e^{-\alpha r}},
\end{equation} 
where $|V|$ is the number of  vertices of the  graph $G=(V, E)$.
\end{prop}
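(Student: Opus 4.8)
The plan is to condition on the past and reduce the claim to a single one-step transition, then estimate the relevant growth rates. First I would note that the conditioning event $\bigcap_{i=1}^{n-1}A_i^{v_{k(i)}}$ pins down the whole trajectory up to time $n-1$, so on this event the process sits in the deterministic state $\by:=\bx+\sum_{i=1}^{n-1}\be_{v_{k(i)}}$ at time $n-1$. Since $A_n^{v_{k(n)}}\subseteq A_n^{V_{k(n)}}$ (because $v_{k(n)}\in V_{k(n)}=\{v_{k(n)}\}\cup D_{v_{k(n)}}$ by \eqref{Vk}), the Markov property turns the left-hand side of \eqref{P11E1} into
\[
\Py\!\left(A_1^{v_{k(n)}}\,\middle|\,A_1^{V_{k(n)}}\right)=\frac{\Gamma_{v_{k(n)}}(\by)}{\sum_{v\in V_{k(n)}}\Gamma_v(\by)}=\frac{1}{1+\sum_{v\in D_{v_{k(n)}}}\Gamma_v(\by)/\Gamma_{v_{k(n)}}(\by)}.
\]
Because $|D_{v_{k(n)}}|\le|V|$, it therefore suffices to prove the pointwise bound $\Gamma_v(\by)/\Gamma_{v_{k(n)}}(\by)\le e^{-\alpha r}$ for every $v\in D_{v_{k(n)}}$.

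To this end, write $w:=v_{k(n)}$ and let $\delta_u$ denote the number of particles added at vertex $u$ during $[1,n-1]$, so that $y_u=x_u+\delta_u$. Factoring the growth rates from \eqref{rates} gives
\[
\frac{\Gamma_v(\by)}{\Gamma_w(\by)}=\frac{\Gamma_v(\bx)}{\Gamma_w(\bx)}\exp\!\Big[\alpha(\delta_v-\delta_w)+\beta\Big(\sum_{u\sim v}\delta_u-\sum_{u\sim w}\delta_u\Big)\Big].
\]
Two structural facts feed the estimate. Since $D_w\cap\{v_1,\dots,v_m\}=\emptyset$ by \eqref{Da}, the vertex $v$ lies outside the clique, so no particle is ever placed there during $[1,n-1]$ and $\delta_v=0$; by hypothesis $\delta_w=r$. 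Hence the first exponent equals $-\alpha r$, and it remains to show that the base factor is $\le 1$ and the $\beta$-exponent is $\le 0$. For the base factor I would invoke the final-maximal-clique properties: writing $w=v_j$, the membership $v\in D_{v_j}$ forces $v\sim v_1,\dots,v_{j-1}$, so $v$ is a competitor in the maximisation defining $\Gamma_{v_j}$; properties \eqref{maxclique_properties1} (for $j=1$) and \eqref{maxclique_properties13} (for $j\ge 2$) then give $\Gamma_v(\bx)\le\Gamma_w(\bx)$.

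The key step, and the one place where the combinatorics of the $D$-sets genuinely enters, is the neighbour-sum comparison $\sum_{u\sim v}\delta_u\le\sum_{u\sim w}\delta_u$. Here I would use that $\delta_u=0$ off the clique, so both sums range only over clique vertices; that $w=v_j$ is adjacent to every other clique vertex, whence $\sum_{u\sim w}\delta_u=\sum_{i\ne j}\delta_{v_i}$; and crucially that $v\nsim v_j$ by the definition of $D_{v_j}$, so the clique vertices adjacent to $v$ form a subset of $\{v_1,\dots,v_m\}\setminus\{v_j\}$. Since every $\delta_{v_i}\ge 0$, summing over the smaller index set yields the inequality, and with $\beta>0$ the $\beta$-exponent is nonpositive. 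Combining the three pieces gives $\Gamma_v(\by)/\Gamma_w(\by)\le e^{-\alpha r}$, and summing over $v\in D_w$ produces \eqref{P11E1}. I expect this neighbour-sum comparison to be the main obstacle, as everything else is routine bookkeeping once the reduction to the state $\by$ is in place.
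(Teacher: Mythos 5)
Your proposal is correct and follows essentially the same route as the paper: reduce via the Markov property to the one-step conditional probability at the state $\by=\bx+\sum_{i=1}^{n-1}\be_{v_{k(i)}}$, then bound each ratio $\Gamma_v(\by)/\Gamma_{v_{k(n)}}(\by)$ for $v\in D_{v_{k(n)}}$ by $e^{-\alpha r}$ using $\delta_v=0$, the neighbour-sum domination (the paper phrases this as $\Gamma_{v_{k(n)}}(\by)=\Gamma_{v_{k(n)}}(\bx)e^{\alpha r+\beta(n-1-r)}$ versus $\Gamma_v(\by)\le\Gamma_v(\bx)e^{\beta(n-1-r)}$, which is the same estimate), and the final-maximal-clique property $\Gamma_v(\bx)\le\Gamma_{v_{k(n)}}(\bx)$, finally counting $|D_{v_{k(n)}}|\le|V|$. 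Your exact-ratio bookkeeping even absorbs the paper's separate treatment of the case $D_{v_{k(n)}}=\emptyset$, so nothing is missing.
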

\begin{proof}[Proof of Proposition~\ref{P11}.]
Observe that 
\begin{align}
\label{cond2}
\Px\left(A^{v_{k(n)}}_{n}\Biggl|A^{V_{k(n)}}_{n},\bigcap_{i=1}^{n-1}A^{v_{k(i)}}_{i}\right)
&=\Py\left(A^{v_{k(n)}}_{1}\Bigl|A_{1}^{V_{k(n)}}\right),
\end{align}
where 
$\by=\bx+\sum_{i=1}^{n-1}\be_{v_{k(i)}}.$
If $D_{v_{k(n)}}=\emptyset$, then the conditional probability  in~\eqref{cond2} 
is trivially equal to $1$. 
Suppose that $D_{v_{k(n)}}\neq\emptyset$. 
Recall that, by assumption,  there are 
$r$ particles at vertex $v_{k(n)}$ at time  $n-1$. Therefore, 
\begin{align*}
\Gamma_{v_{k(n)}}(\by)&=\Gamma_{v_{k(n)}}(\bx)e^{\alpha r+\beta(n-1-r)},\\
\Gamma_{v}(\by)&\leq \Gamma_{v}(\bx)e^{\beta(n-1-r)},\, \text{ for }\, v\in D_{v_{k(n)}}.
\end{align*}
Consequently, 
\begin{align*}
\P_{\by}\left(A^{v_{k(n)}}_{1}\Bigl|A_{1}^{V_{k(n)}}\right)
&\geq \frac{\Gamma_{v_{k(n)}}(\bx)e^{\alpha r+\beta(n-1-r)}}
{\Gamma_{v_{k(n)}}(\bx)e^{\alpha r+\beta(n-1-r)}+e^{\beta(n-1-r)}\sum_{v\in D_{v_{k(n)}}}\Gamma_{v}(\bx)}\\
&=\frac{1}{1+e^{-\alpha r}\sum_{v\in D_{v_{k(n)}}}\frac{\Gamma_{v}(\bx)}{\Gamma_{v_{k(n)}}(\bx)}}.
\end{align*}
By assumption, the subgraph  $(v_1,...,v_m)$ is a final maximal clique for the state $\bx$. This implies that 
 $\Gamma_{v_{k(n)}}(\bx)\geq \Gamma_v(\bx)$ for $v\in D_{v_{k(n)}}$ and, hence, 
$\sum_{v\in D_{v_{k(n)}}}\frac{\Gamma_{v}(\bx)}{\Gamma_{v_{k(n)}}(\bx)}\leq |V|.$
Finally, we   obtain that 
$$\P_{\by}\left(A^{v_{k(n)}}_{1}\Bigl|A_{1}^{V_{k(n)}}\right)\geq \frac{1}{1+|V|e^{-\alpha r}},$$
 as claimed.
 \end{proof}

\section{Proof of Theorem \ref{T1}}
\label{proofT1}

\subsection{Localisation in a final  maximal clique}
\label{ProofL1}


Define the following events. 
\begin{align}
\label{Event11}
A_{n}^{(v_1,...,v_m)}&=\{\text{at time }\,  n\, \text{  a particle is placed at  site }\, v\in(v_1,...,v_m)\},\,n\in \Z_{+},\\
A_{[1,n]}^{(v_1,...,v_m)}&=\bigcap_{k=1}^nA_{k}^{(v_1,...,v_m)},\,\, n\in \Z_{+}\cup\{\infty\}\label{Event12}.
\end{align}

\begin{lemma}
\label{L1}
 Let $X(n)=(X_v(n),\, v\in V)$ be a growth process with parameters $(\alpha, \beta)$  on a finite connected 
  graph  $G=(V, E)$  with at least  two vertices.
  Given a state $\bx\in \Z_{+}^V$ let a subgraph   $G(v_1,...,v_{m})$ be a final  maximal  
 clique for the  state $\bx$, and let $0<\alpha\leq \beta$. 
Then
 there exists  $\eps>0$ depending  only on $\alpha$ and the number of the graph vertices such that 
\begin{equation}
\label{b}
\Px\left(A^{(v_1,...,v_m)}_{[1,\infty]}\right)
\geq \eps.
\end{equation}
In other words,  all particles can be  allocated 
at vertices of a final  maximal clique with probability that is  not less than some $\eps>0$ not depending on the initial state.

\end{lemma}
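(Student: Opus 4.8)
The plan is to reduce the event $A_{[1,\infty]}^{(v_1,\ldots,v_m)}$ to a sum over clique-vertex sequences, to factor each term through the auxiliary measure $\Q_{\bx,n}$, and then to apply Proposition~\ref{P11} factor by factor. First I would observe that, because the clique vertices $v_1,\ldots,v_m$ are distinct, at each time at most one of the events $A_n^{v_j}$ can occur, so $A_{[1,N]}^{(v_1,\ldots,v_m)}$ decomposes as a disjoint union, over sequences $(v_{k(1)},\ldots,v_{k(N)})\in S(N)$, of the events $\bigcap_{n=1}^N A_n^{v_{k(n)}}$. Hence $\Px(A_{[1,N]}^{(v_1,\ldots,v_m)})=\sum_{S(N)}\Px(\bigcap_{n=1}^N A_n^{v_{k(n)}})$, and it suffices to bound each summand from below by $\eps$ times its $\Q_{\bx,N}$-weight.

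Next, for a fixed sequence I would write the path probability as a telescoping product of one-step conditional probabilities and split each factor, using $A_n^{v_{k(n)}}\subseteq A_n^{V_{k(n)}}$, into the probability of landing in the block $V_{k(n)}$ times the conditional probability of landing at the clique vertex $v_{k(n)}$ given that a particle lands in $V_{k(n)}$. By~\eqref{aux00} the product of the first factors is exactly $\Q_{\bx,N}((v_{k(1)},\ldots,v_{k(N)}))$, the weight of the auxiliary measure, while each second factor is bounded below by Proposition~\ref{P11}.

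The main step is to bound the product of these second factors uniformly in the path and in $\bx$. Here I would regroup the factors by clique vertex: if $v_j$ is selected $t_j$ times along the sequence, the successive values of $r$ appearing in Proposition~\ref{P11} for placements at $v_j$ run through $0,1,\ldots,t_j-1$, so the product of the corresponding factors equals $\prod_{r=0}^{t_j-1}(1+|V|e^{-\alpha r})^{-1}$, which is at least the infinite product $\eps_0:=\prod_{r=0}^\infty (1+|V|e^{-\alpha r})^{-1}$. The decisive point, and the one place where the hypothesis $\alpha>0$ enters, is that $\alpha>0$ forces $\sum_{r\ge 0}|V|e^{-\alpha r}<\infty$, whence $\eps_0>0$; this is what makes the lower bound positive and independent of the initial configuration. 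Since the clique has $m\le|V|$ vertices, the whole product of second factors is at least $\eps_0^m\ge\eps_0^{|V|}=:\eps$, a quantity depending only on $\alpha$ and $|V|$.

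Finally, combining the two pieces gives $\Px(\bigcap_{n=1}^N A_n^{v_{k(n)}})\ge \eps\,\Q_{\bx,N}((v_{k(1)},\ldots,v_{k(N)}))$ for every sequence; summing over $S(N)$ and invoking Proposition~\ref{PQ} (that $\Q_{\bx,N}$ is a probability measure) yields $\Px(A_{[1,N]}^{(v_1,\ldots,v_m)})\ge\eps$ for all $N$. Since the events $A_{[1,N]}^{(v_1,\ldots,v_m)}$ decrease to $A_{[1,\infty]}^{(v_1,\ldots,v_m)}$, continuity of probability from above gives the claimed bound. I expect the only genuine obstacle to be the regrouping-and-convergence step of the third paragraph; the rest is bookkeeping with the measure $\Q_{\bx,n}$ already prepared in the preliminaries.
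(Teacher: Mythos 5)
Your proposal is correct and follows essentially the same route as the paper's proof: the disjoint decomposition over $S(n)$, the splitting of each one-step factor through the block events $A_n^{V_{k(n)}}$ so that the block probabilities multiply to the $\Q_{\bx,n}$-weight, the vertex-by-vertex regrouping of the Proposition~\ref{P11} factors into convergent infinite products (your inclusion of the $r=0$ factor is if anything slightly more careful than the paper's indexing), and the final summation via Proposition~\ref{PQ} with continuity from above. No gaps.
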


\begin{proof}[Proof of Lemma~\ref{L1}]

It is easy to see that 
$$A_{[1,n]}^{(v_1,...,v_m)}=\bigcup_{(v_{k(1)},...,v_{k(n)})\in S(n)}\left( \bigcap_{i=1}^nA_{i}^{v_{k(i)}}\right),$$
where  events $A_{i}^{v_{k(i)}}$ are defined in~\eqref{Event21} , 
$S(n)$ is the set of sequences defined in~\eqref{Sn}, and the union is taken over all elements of  $S(n)$.
Therefore
\begin{equation}
\label{total}
\Px\left(A_{[1,n]}^{(v_1,...,v_m)}\right)=\sum_{(v_{k(1)},...,v_{k(n)})\in S(n)} \Px\left(\bigcap_{i=1}^n A_{i}^{v_{k(i)}}\right).
\end{equation}
Next, given  $(v_{k(1)},...,v_{k(n)})\in S(n)$ we are going to obtain a lower 
bound for the probability $ \Px\left(\bigcap_{i=1}^n A_{i}^{v_{k(i)}}\right)$.
Noting  that $A^{v_{k}}_{i}\cap A^{V_{j}}_{i}=\emptyset$ for $k\neq j$ and recalling
equation~\eqref{aux00} 
 we obtain  that 
\begin{equation}
\label{e1}
\begin{split}
\Px\left(A^{v_{k(n)}}_{n}\Biggl| \bigcap_{i=1}^{n-1}A^{v_{k(i)}}_{i}\right)&
=\Px\left(A^{v_{k(n)}}_{n}\Biggl|A^{V_{k(n)}}_{n},\bigcap_{i=1}^{n-1}A^{v_{k(i)}}_{i}\right)
\Px\left(A^{V_{k(n)}}_{n}\Biggl|\bigcap_{i=1}^{n-1}A^{v_{k(i)}}_{i}\right)\\
&=\Px\left(A^{v_{k(n)}}_{n}\Biggl|A^{V_{k(n)}}_{n},\bigcap_{i=1}^{n-1}A^{v_{k(i)}}_{i}\right)
\P_{\bx+\sum_{i=1}^{n-1}\be_{v_{k(i)}}}\left(A_{1}^{V_{k(n)}}\right).
\end{split}
\end{equation}

Suppose that $X_{v_{k(n)}}(n-1)=X_{v_{k(n)}}(0)+r$ for some $0\leq r\leq n-1$. In other words, $r$ particles are  allocated at vertex $v_{k(n)}$
 during the time interval $[1, n-1]$. Then, 
by Proposition~\ref{P11},
\begin{align}
\label{e11}
\Px\left(A^{v_{k(n)}}_{n}\Biggl|A^{V_{k(n)}}_{n},\bigcap_{i=1}^{n-1}A^{v_{k(i)}}_{i}\right)
&\geq \frac{1}{1+|V|e^{-\alpha r}}.
\end{align}
Combining~\eqref{e1} and~\eqref{e11} gives that  
\begin{equation}
\Px\left(A^{v_{k(n)}}_{n}\Biggl| \bigcap_{i=1}^{n-1}A^{v_{k(i)}}_{i}\right)
\geq \frac{1}{1+|V|e^{-\alpha r}}
\P_{\bx+\sum_{i=1}^{n-1}\be_{v_{k(i)}}}\left(A_{1}^{V_{k(n)}}\right).
\end{equation}
Consequently, 
\begin{equation}
\label{iter}
\begin{split}
\Px\left(\bigcap_{i=1}^n A_{i}^{v_{k(i)}}\right)&=\Px\left(A^{v_{k(n)}}_{n}\Biggl| \bigcap_{i=1}^{n-1}A^{v_{k(i)}}_{i}\right)
\Px\left(\bigcap_{i=1}^{n-1} A_{i}^{v_{k(i)}}\right)\\
& \geq \frac{1}{1+|V|e^{-\alpha r}}
\P_{\bx+\sum_{i=1}^{n-1}\be_{v_{k(i)}}}\left(A_{1}^{V_{k(n)}}\right)
\Px\left(\bigcap_{i=1}^{n-1} A_{i}^{v_{k(i)}}\right)
\end{split}
\end{equation}

Suppose that  $(v_{k(1)},...,v_{k(n)})$ is such that $n_i$ out of first $n$  particles are allocated  at vertex $v_i$, $i=1,..,m$, 
where $n_1,...,n_m: n_1+...+n_m=n$.
 Then, iterating equation~\eqref{iter} gives the following lower bound 
\begin{equation}
\label{e0}
\begin{split}
\Px\left(\bigcap_{i=1}^{n}A^{v_{k(i)}}_{i}\right)
&\geq \prod_{i=1}^m\left(\prod_{r=1}^{n_i-1}\frac{1}{1+|V|e^{-\alpha r}}\right)
\Q_{{\bf x},n}((v_{k(1)},...,v_{k(n)})),
\end{split}
\end{equation}
where probability $\Q_{{\bf x},n}$ is defined in~\eqref{aux}. It is easy to see   that

\begin{equation}
\label{e00}
\begin{split}
\Px\left(\bigcap_{i=1}^{n}A^{v_{k(i)}}_{i}\right)
&\geq \eps_n\Q_{{\bf x},n}((v_{k(1)},...,v_{k(n)})),
\end{split}
\end{equation}
where 
\begin{equation}
\label{eps}
\eps_n:=\left(\prod_{r=1}^{n-1}\frac{1}{1+|V|e^{-\alpha r}}\right)^m
\geq \left(\prod_{r=1}^{\infty}\frac{1}{1+|V|e^{-\alpha r}}\right)^m=:\eps>0.
\end{equation}
Therefore, for every $(v_{k(1)},...,v_{k(n)})\in S(n)$ we have that 
$$\Px\left(\bigcap_{i=1}^n A_{i}^{v_{k(i)}}\right)\geq \eps\Q_{{\bf x},n}((v_{k(1)},...,v_{k(n)})).$$
Combining the preceding display with  the fact that   
$\Q_{{\bf x},n}$ is a probability measure on $S(n)$ (Proposition~\ref{PQ}) 
 gives that 
$$\Px\left(A_{[1,n]}^{(v_1,...,v_m)}\right)\geq \eps\sum_{(v_{k(1)},...,v_{k(n)})\in S(n)}
\Q_{{\bf x},n}((v_{k(1)},...,v_{k(n)}))=\eps.$$
Consequently, 
$\Px\left(A_{[1,\infty]}^{(v_1,...,v_m)}\right)\geq \eps,$
where  $\eps>0$ (defined in~\eqref{eps})  does not depend on $\bx$. The lemma is proved.
\end{proof}

\subsection{Eventual localisation}

Let us  show that, with probability one, the growth process eventually localises at a  random maximal clique, as claimed.
To this end,  we use the renewal argument from the proof of~\cite[Theorem 1]{CMSV}.
Given  an arbitrary initial state $X(0)=(X_v(0),\, v\in V)\in \Z_{+}^V$ define 
the following sequence of random times $T_k,\, k\geq 0$. Set $T_0=0$. Suppose that time moments $T_1,...,T_k$ are defined. 
Then, given a process state $X(T_k)$ at time $t=T_k$ let $G_k$ be a final  maximal clique corresponding to state $X(T_k)$.
Define  $T_{k+1}$ as the  first time moment when a particle is allocated in a vertex not belonging  
to $G_k$.
By Lemma~\ref{L1}  $\P(T_{k+1}<\infty|X(T_k))\leq 1-\eps$ for some $\eps>0$.
This yields  that  with probability one
only a finite number of events $\{T_k<\infty\}$ occur. In other words, 
with probability one,
eventually the growth process localises at a random  maximal clique, as claimed.

\subsection{Convergence of ratios $X_v(n)/X_u(n)$}

Next we are going to show that if  all particles are allocated  at vertices of a maximal clique, then 
pairwise ratios  $X_v(n)/X_u(n)$, where $v,u$ are any two  vertices of the maximal clique, must converge,
 as claimed in Theorem~\ref{T1}. There are two cases to consider.

\subsubsection{Case: $\alpha=\beta$}
Let $\lambda:=\alpha=\beta$.
Given state $\bx=(x_v,\, v\in V)\in\Z_{+}^V$ let an induced 
subgraph $G(v_1,..., v_m)$ be a final maximal clique for state $\bx$.
Define 
\begin{equation}
\label{pi}
p_i:=\frac{\Gamma_{v_i}(\bx)}{\sum_{j=1}^m\Gamma_{v_j}(\bx)},\quad i=1,...,m.
\end{equation}
Given $\delta>0$ define 
the following subset of trajectories of the growth process
\begin{equation}
\label{Bd}
B_{\delta}=\left\{\sum_{i=1}^m|X_{v_i}(n)-p_in|\geq\delta n \, \text{ for infinitely many } n\right\}
\end{equation}
 and let   $B^c_{\delta}$ be  the complement of $B_{\delta}$. 
Then 
\begin{equation}
\label{Bdelta}
\Px\left(A_{[1,\infty]}^{(v_1,...,v_m)}\right)=\Px\left(B^c_{\delta}\bigcap A_{[1,\infty]}^{(v_1,...,v_m)}\right)+
\Px\left(B_{\delta}\bigcap A_{[1,\infty]}^{(v_1,...,v_m)}\right).
\end{equation}
\begin{prop}
\label{Pdelta}
For every $\delta>0$ and $\bx\in\Z_{+}^V$
$$\Px\left(B_{\delta}\bigcap A_{[1,\infty]}^{(v_1,...,v_m)}\right)=0.$$
 \end{prop}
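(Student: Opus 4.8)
The plan is to exploit the algebraic simplification that the hypothesis $\lambda:=\alpha=\beta$ forces on the growth rates inside a clique, and then to read off the convergence from a strong law for martingales. First I would record the key structural fact. On the event $A_{[1,n]}^{(v_1,...,v_m)}$ that the first $n$ particles are all allocated inside the clique, every component $X_u(n)$ with $u\notin\{v_1,\dots,v_m\}$ is frozen at $x_u$. For a clique vertex $v_i$ the exponent of $\Gamma_{v_i}(X(n))$ equals $\lambda\big(\sum_{j=1}^m X_{v_j}(n)+\sum_{u\sim v_i,\,u\notin\{v_1,\dots,v_m\}}x_u\big)$, because completeness of the clique makes every other $v_j$ a neighbour of $v_i$. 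The first sum is common to all $i$ and, on the event, exceeds its value at $\bx$ by exactly $n$; hence $\Gamma_{v_i}(X(n))=\Gamma_{v_i}(\bx)\,e^{\lambda n}$ for every clique vertex, and therefore $\Gamma_{v_i}(X(n))/\sum_{j=1}^m\Gamma_{v_j}(X(n))=p_i$ for all $n$, with $p_i$ as in~\eqref{pi}. In words: as long as the process remains in the clique, conditionally on the next particle landing in the clique it lands at $v_i$ with the fixed probability $p_i$, irrespective of how the earlier particles were distributed.

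I would then convert this into a martingale. Let $\xi_k$ be the vertex receiving the $k$-th particle, let $\tau=\inf\{k:\xi_k\notin\{v_1,\dots,v_m\}\}$ be the exit time (so that $A_{[1,\infty]}^{(v_1,...,v_m)}=\{\tau=\infty\}$), and set $d_k^{(i)}={\bf 1}\{\xi_k=v_i\}-p_i\,{\bf 1}\{\xi_k\in\{v_1,\dots,v_m\}\}$. The event $\{k\le\tau\}$ is $\mathcal{F}_{k-1}$-measurable, and on it the identity above gives $\Ee[{\bf 1}\{\xi_k=v_i\}\mid\mathcal{F}_{k-1}]=p_i\,\Ee[{\bf 1}\{\xi_k\in\{v_1,\dots,v_m\}\}\mid\mathcal{F}_{k-1}]$, so that $\Ee[d_k^{(i)}{\bf 1}\{k\le\tau\}\mid\mathcal{F}_{k-1}]=0$. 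Consequently $M_n^{(i)}:=\sum_{k=1}^{n\wedge\tau}d_k^{(i)}$ is a martingale with increments bounded by $1$, and on $\{\tau=\infty\}$ it coincides with $X_{v_i}(n)-x_{v_i}-p_i n$.

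Finally I would apply the $L^2$ strong law for martingales: since $\sum_n\Ee[(M_n^{(i)}-M_{n-1}^{(i)})^2]/n^2\le\sum_n n^{-2}<\infty$, the series $\sum_k (M_k^{(i)}-M_{k-1}^{(i)})/k$ converges almost surely and Kronecker's lemma yields $M_n^{(i)}/n\to0$ almost surely. On $\{\tau=\infty\}$ this reads $X_{v_i}(n)/n\to p_i$ for each $i$, hence $\sum_{i=1}^m|X_{v_i}(n)-p_i n|=o(n)$ almost surely there, so almost every trajectory in $A_{[1,\infty]}^{(v_1,...,v_m)}$ eventually leaves $B_\delta$; this is exactly $\Px(B_\delta\cap A_{[1,\infty]}^{(v_1,...,v_m)})=0$. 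I expect the main obstacle to be the conditioning: one cannot simply declare the clique choices to be i.i.d.\ multinomial under $\Px(\cdot\mid\{\tau=\infty\})$, because the per-step probability of staying in the clique depends on the within-clique configuration and would therefore bias that conditional law. The stopped-martingale formulation is precisely what sidesteps this difficulty, replacing the awkward conditioning by an identity that holds pathwise up to the exit time $\tau$.
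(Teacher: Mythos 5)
Your proof is correct, and its first half rests on exactly the same algebraic fact as the paper's: since $\alpha=\beta=\lambda$ and the clique is complete, every clique growth rate picks up the common factor $e^{\lambda n}$ as long as all $n$ particles land inside the clique, so that conditionally on the next particle landing in the clique it lands at $v_i$ with the frozen probability $p_i$ of \eqref{pi}. Where you genuinely diverge is in how this identity is turned into the almost-sure statement. The paper converts it into the one-sided bound $\Px\bigl(A_n^{v_{k(n)}}\bigl|\bigcap_{i=1}^{n-1}A_i^{v_{k(i)}}\bigr)\le p_{k(n)}$ of \eqref{aa10} (an inequality precisely because the probability of remaining in the clique is at most $1$), iterates it to dominate each fixed path probability by the multinomial weight $p_1^{n_1}\cdots p_m^{n_m}$ in \eqref{aa2}, and then transfers $B_\delta$ to an i.i.d.\ box process $Y(n)$ where the classical strong law applies; you instead stop the centred increments $d_k^{(i)}=\mathbf{1}\{\xi_k=v_i\}-p_i\,\mathbf{1}\{\xi_k\in\{v_1,\dots,v_m\}\}$ at the exit time $\tau$, check $\Ee\bigl[d_k^{(i)}\mathbf{1}\{k\le\tau\}\bigl|\mathcal{F}_{k-1}\bigr]=0$ (valid since $\{k\le\tau\}\in\mathcal{F}_{k-1}$), and invoke the $L^2$ martingale strong law plus Kronecker to get $M_n^{(i)}/n\to 0$ a.s., which on $\{\tau=\infty\}=A_{[1,\infty]}^{(v_1,\dots,v_m)}$ reads $X_{v_i}(n)/n\to p_i$ and so annihilates $B_\delta\cap A_{[1,\infty]}^{(v_1,\dots,v_m)}$. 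Both routes are sound. The paper's buys economy: the path decomposition over $S(n)$ and the comparison-measure machinery are already in place from the proof of Lemma~\ref{L1}, so the domination argument costs only a few lines. Yours buys robustness and a slightly stronger output: the stopped-martingale identity needs only equality of the conditional within-clique ratios up to $\tau$, avoids enumerating paths entirely, and yields the a.s.\ convergence $X_{v_i}(n)/n\to p_i$ on the localisation event in one stroke rather than one null set $B_\delta$ at a time. Your closing caution is also exactly right: the clique choices are not i.i.d.\ under $\Px(\,\cdot\,|\,\tau=\infty)$, and the paper's inequality \eqref{aa10} is its own way of sidestepping the same trap that your stopping at $\tau$ avoids.
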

 \begin{proof}[Proof of Proposition~\ref{Pdelta}]
Let $(v_{k(1)},...,v_{k(n)})\in S(n)$. 
Observe that the assumption $\alpha=\beta$ implies the following equation
$$
\P_{\bx}\left(A_{n}^{v_{k(n)}}\Biggl|
A_{n}^{(v_1,...v_m)},  \bigcap_{i=1}^{n-1}A_i^{v_{k(i)}}\right)=
p_{k(n)},
$$
 where probabilities  $p_i, i=1,...,m,$ are defined in~\eqref{pi}.
Therefore,
\begin{equation}
\label{aa10}
\begin{split}
\P_{\bx}\left(A_{n}^{v_{k(n)}}\Biggl| \bigcap_{i=1}^{n-1}A_i^{v_{k(i)}}\right)&=\P_{\bx}\left(A_{n}^{v_{k(n)}}\Biggl|
A_{n}^{(v_1,...v_m)},  \bigcap_{i=1}^{n-1}A_i^{v_{k(i)}}\right)\P_{\bx}\left(A_{n}^{(v_1,...v_m)}\Biggl|
 \bigcap_{i=1}^{n-1}A_i^{v_{k(i)}}\right)\\
&\leq\P_{\bx}\left(A_{n}^{v_{k(n)}}\Biggl|
A_{n}^{(v_1,...v_m)},  \bigcap_{i=1}^{n-1}A_i^{v_{k(i)}}\right) = p_{k(n)},
\end{split}
\end{equation} 
and, hence, 
\begin{equation}
\label{iter1}
\begin{split}
\Px\left(\bigcap_{i=1}^n A_{i}^{v_{k(i)}}\right)&=\Px\left(A^{v_{k(n)}}_{n}\Biggl| \bigcap_{i=1}^{n-1}A^{v_{k(i)}}_{i}\right)
\Px\left(\bigcap_{i=1}^{n-1} A_{i}^{v_{k(i)}}\right) \leq p_{k(n)}
\Px\left(\bigcap_{i=1}^{n-1} A_{i}^{v_{k(i)}}\right).
\end{split}
\end{equation}
Let   $(v_{k(1)},...,v_{k(n)})$ be such that 
$$\sum_{j=1}^n\be_{v_{k(j)}}=\sum_{k=1}^mn_k\be_{v_k},\, \text{ where }\, \sum_{k=1}^mn_k=n,$$
 i.e., $n_i$ out of first $n$ particles are allocated at vertex $v_i$. Then, 
 iterating equation~\eqref{iter1} gives the following upper bound for the  probability of a fixed path of length $n$ of the growth process 
\begin{equation}
\label{aa2}
\Px\left(\bigcap_{j=1}^nA_j^{v_{k(j)}}\right)\leq p_{1}^{n_1}\cdots p_{m}^{n_m}.
\end{equation}
Consider a random process $Y(n)=(Y_1(n),...,Y_m(n))$ describing results of independent 
trials, where  in each trial a particle is  allocated in one of $m$ boxes labeled by $i=1,...,m$ with respective 
 probabilities  $p_i$, $i=1,...,m$,  and  $Y_i(n)$  is the number of particles in box $i$ after $n$ trials. 
Let $\widetilde \P$ denote distribution of  this process.
  It is easy to see that 
 the right hand side of equation~\eqref{aa2} is equal to probability  $\widetilde \P(Y_i(n)=r_i,\, i=1,...,m)$, computed 
 given that the boxes are initially empty.
 Define 
  $$\widetilde B_{\delta}=\left\{ \sum_{i=1}^m|Y_{i}(n)-p_in|\geq\delta n \, \text{ for infinitely many } n\right\}.$$
Equation~\eqref{aa2} implies that 
$\Px\left(B_{\delta}\bigcap  A_{[1,\infty]}^{(v_1,...,v_m)}\right)\leq \widetilde\P\left(\widetilde B_{\delta}\right)$.
By the strong law of large numbers for the i.i.d. case we have that $\widetilde\P\left(\widetilde B_{\delta}\right)=0$,
and, hence,  
$ \Px\left(B_{\delta}\bigcap  A_{[1,\infty]}^{(v_1,...,v_m)}\right)=0$, as claimed.  
\end{proof}

It follows from Proposition~\ref{Pdelta} and equation~\eqref{Bdelta} that  
$$\Px\left(\frac{X_{v_i}(n)}{X_{v_j}(n)}\to \frac{p_i}{p_j}, \, \text{ as }\, n\to \infty
\Biggl|A_{[1,\infty]}^{(v_1,...,v_m)}\right)=1,$$
for  $i,j=1,...,m$. 
Finally, a direct computation gives  that 
$\frac{p_i}{p_j}=\frac{\Gamma_{v_i}(\bx)}{\Gamma_{v_j}(\bx)}=e^{C_{v_iv_j}},$
where 
$$C_{v_iv_j} =\lambda\lim_{n\to \infty}\sum_{w\in V}X_{w}(n)[{\bf 1}_{\{w\sim v_i, w\nsim v_j\}}-
{\bf 1}_{\{w\nsim v_i, w\sim v_j\}}],\, \text{ for }\, i,j=1,...,m.$$
The proof of  Theorem~\ref{T1} in the case $\alpha=\beta$ is now finished.

\subsubsection{Case: $\alpha<\beta$}

We start with an auxiliary statement   (Lemma~\ref{MC_Z}) that might be  of interest on its own right.

\begin{lemma}
\label{MC_Z}
Let $X(n)=(X_1(n),...,X_m(n))$ be a growth process with parameters $0<\alpha<\beta$ 
 on  a complete graph  with $m\geq 2$ vertices labeled by  $1, ...,m$, and let 
$Z_i(n)=X_i(n)-X_m(n),\, i=1,..., m-1$.  Then  
$Z(n):=(Z_1(n),..., Z_{m-1}(n))\in \Z^{m-1}$ is  an irreducible  positive recurrent Markov chain.
\end{lemma}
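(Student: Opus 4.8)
The plan is to first reduce the transition mechanism of $X(n)$ on the complete graph to a simple form, then read off the dynamics of $Z(n)$, and finally verify irreducibility and positive recurrence via the Foster--Lyapunov criterion. On the complete graph every vertex is adjacent to all others, so writing $S=\sum_{w=1}^{m}x_w$ we have $\al x_v+\beta\sum_{u\sim v}x_u=\beta S-(\beta-\al)x_v$. Hence, with $\gamma:=\beta-\al>0$,
\begin{equation*}
\frac{\Gamma_v(\bx)}{\Gamma(\bx)}=\frac{e^{-\gamma x_v}}{\sum_{w=1}^{m}e^{-\gamma x_w}}.
\end{equation*}
Dividing numerator and denominator by $e^{-\gamma x_m}$ shows that these probabilities depend on $\bx$ only through the differences $z_i=x_i-x_m$; consequently $Z(n)$ is a time-homogeneous Markov chain on $\Z^{m-1}$. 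Writing $D(z):=1+\sum_{j=1}^{m-1}e^{-\gamma z_j}$, from state $z$ the chain jumps to $z+\be_i$ with probability $p_i(z)=e^{-\gamma z_i}/D(z)$ for $i=1,\dots,m-1$ (a particle placed at vertex $i$), and to $z-\mathbf{1}$, where $\mathbf{1}=(1,\dots,1)$, with probability $p_m(z)=1/D(z)$ (a particle placed at vertex $m$, which lowers every difference by one). All these probabilities are strictly positive.

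Irreducibility is then immediate: the admissible increments $\be_1,\dots,\be_{m-1}$ and $-\mathbf{1}=-(\be_1+\cdots+\be_{m-1})$ each occur with positive probability, and they generate $\Z^{m-1}$ as a group — for instance $-\be_i=-\mathbf{1}+\sum_{j\neq i}\be_j$ is reachable — so any state can be reached from any other in finitely many steps.

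For positive recurrence I would apply Foster's criterion with the Lyapunov function $V(z)=\sum_{i=1}^{m-1}z_i^2\ge 0$. A direct computation of the one-step drift, using $\sum_{i=1}^{m-1}p_i(z)=1-p_m(z)$, gives
\begin{equation*}
\Ee\!\left[V(Z(n+1))-V(Z(n))\mid Z(n)=z\right]
=\frac{2}{D(z)}\sum_{i=1}^{m-1}z_i\bigl(e^{-\gamma z_i}-1\bigr)+1+\frac{m-2}{D(z)}.
\end{equation*}
The last two terms lie in the bounded interval $[1,m-1]$ because $D(z)\ge 1$. Each summand $h(t):=t(e^{-\gamma t}-1)$ is nonpositive (and strictly negative for $t\neq0$), so the drift is dominated by the first term, and the key estimate is that this term tends to $-\infty$ as $|z|\to\infty$. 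I would argue by passing to a subsequence along which each coordinate converges, tends to $+\infty$, or tends to $-\infty$. If some coordinate tends to $-\infty$, then both $\sum_i z_i e^{-\gamma z_i}$ and $D(z)$ are dominated by the most negative coordinates, and their ratio is a weighted average of the $z_i$ that tends to $-\infty$; if no coordinate tends to $-\infty$ while $|z|\to\infty$, then $D(z)$ stays bounded whereas $-\sum_i z_i\to-\infty$, so the term again diverges to $-\infty$.

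Hence there exist a finite set $K\subset\Z^{m-1}$ and $\eps>0$ with $\Ee[V(Z(n+1))-V(Z(n))\mid Z(n)=z]\le-\eps$ for $z\notin K$, while $\Ee[V(Z(n+1))\mid Z(n)=z]<\infty$ for every $z$ since the jumps are bounded. By Foster's theorem the irreducible chain $Z(n)$ is positive recurrent, which is the assertion of the lemma. The main obstacle is therefore not the criterion itself but the uniform-in-direction drift estimate, that is, controlling the competition between the exponentially large denominator $D(z)$ and the exponentially large negative summands $h(z_i)$ as coordinates diverge to $-\infty$.
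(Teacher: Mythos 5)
Your proposal is correct and takes essentially the same route as the paper: the same reduction of $Z(n)$ to a Markov chain with increments $+\be_i$ and $-(\be_1+\cdots+\be_{m-1})$, an explicit irreducibility check, the same Lyapunov function $\sum_{i}z_i^2$, and Foster's criterion, with the key point in both arguments being that the exponentially negative summands $z_i(e^{-\gamma z_i}-1)$ beat the exponentially large denominator $D(z)$. The only differences are cosmetic: you absorb the initial state into the differences $z_i$, whereas the paper keeps constants $a_i=\Gamma_i(\bx)/\Gamma_m(\bx)$ in the transition probabilities (which lets the proof transfer verbatim to the modified process with rates~\eqref{rates_mod} used later), and the paper replaces your subsequence/limit argument for the drift by the explicit uniform bound $h_i(z,\eps)\leq -|z|+H(\eps)$, yielding the quantitative cutoff $C=\eps+(m-1)H(\eps)$.
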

\begin{proof}[Proof of Lemma~\ref{MC_Z}]
Let $X(0)=\bx=(x_1,...,x_m)\in\Z_{+}^m$. 
For short, denote  $\Gamma_{i}=\Gamma_{i}(\bx),\, i=1,...,m$,  and   $-\lambda=\alpha-\beta<0$.
Note  that if  $\by=(x_1+r_1, \ldots, x_m+r_m)\in\Z_{+}^m$, where 
$\sum_{i=1}^mr_i=n$, then 
 $$\Gamma_{i}(\by)=\Gamma_{i}e^{\alpha r_i+\beta(n-r_i)}=\Gamma_{i}e^{-\lambda r_i}e^{\beta n},\quad i=1,...,m.
$$
Therefore
\begin{equation}
\label{tr_prob}
\begin{split}
\P(Z(n+1)=Z(n)+\be_i|Z(n)=\bz)&=\frac{\Gamma_{i}e^{-\lambda z_i}}
{\Gamma_{m}+\sum_{i=1}^{m-1}\Gamma_{i}e^{-\lambda z_i}},\quad i=1,...,m-1,\\
\P(Z(n+1)=Z(n)-\be
|Z(n)=\bz)&=\frac{\Gamma_m}{\Gamma_{m}+\sum_{i=1}^{m-1}\Gamma_{i}e^{-\lambda z_i}},
\end{split}
\end{equation}
for all   $\bz=(z_1,...,z_{m-1})\in \Z^{m-1}$,  
where $\be_i$ is now  the $i$-th unit vector in $\Z^{m-1}$, and $\be=\be_1+\cdots+\be_{m-1}\in\Z^{m-1}$.
Thus,  $Z(n)$ is a Markov chain with  transition probabilities given by~\eqref{tr_prob}.
It is easy to see that this Markov chain is irreducible.
Further, define the following function 
\begin{equation}
\label{f-function}
f(\bz)=\sum\limits_{i=1}^{m-1}z_i^2,\quad  \bz=(z_1,...,z_{m-1})\in \Z^{m-1},
\end{equation}
and show that given $\eps>0$ 
\begin{equation}
\label{posit}
\begin{split}
\E(f(Z(n+1))&-f(Z(n))|Z(n)=\bz)\leq -\eps,\\
\text{for }\, \bz&=(z_1,...,z_{m-1})\in\Z_{+}^{m-1}: |z_1|+...+|z_{m-1}|\geq C,
\end{split}
\end{equation}
provided that  $C=C(\eps)>0$ is sufficiently large.
Indeed, fix $\eps>0$. A direct computation gives that 
\begin{equation}
\label{bb1}
\E(f(Z(n+1))-f(Z(n))|Z(n)=\bz)+\eps=\frac{\eps+\sum_{i=1}^{m-1} h_i(z_i, \eps)}{W(\bz)},
\end{equation}
where 
$h_i(z,\eps):=(2z+1+\eps)a_ie^{-\lambda z}-2z+1$  for $z\in \R$, 
$a_{i}=\frac{\Gamma_{i}}{\Gamma_{m}},\, i=1,...,m-1$, and 
$W(\bz)=1+\sum_{i=1}^{m-1}a_ie^{-\lambda z_i}$.
It is easy to see that for each $i=1,...,m-1$, there exists $C_i>0$ such that $h_i(z,\eps)\leq-|z|$ for $|z|>C_i$. Define
$$H(\eps):=\max\limits_{i=1,...,m-1}\sup_{-\infty<z<\infty}(h_i(z,\eps)+|z|).$$
Note that  $H(\eps)>0$, as $h_i(0,\eps)=a_i+1+\eps>0$, $i=1,...,m-1$. 
It follows from the definition  of $H$ that 
$$\sum_{i=1}^{m-1} h_i(z_i, \eps)=\sum_{i=1}^{m-1} (h_i(z_i,\eps)+|z_i|)-|z_i|\leq (m-1)H(\eps)-\sum_{i=1}^{m-1}|z_i|.$$
Combining the preceding equation 
 with equation~\eqref{bb1} gives equation~\eqref{posit}, where 
 $C=\eps+(m-1)H(\eps)$. 
Thus, positive recurrence of Markov chain $Z(n)$ follows  from  the Foster criterion for positive recurrence of a Markov chain  (e.g. 
\cite[Theorem 2.6.4]{MPW})  with 
the Lyapunov function $f$.
\end{proof}

\begin{rmk}
{\rm 
Note that Lemma~\ref{MC_Z} is  reminiscent of~\cite[Theorem 1, Part (1)]{SV10}. Moreover, to show positive recurrence 
of the Markov chain $Z(n)$ we use the criterion for positive recurrence with  the same 
 Lyapunov function~\eqref{f-function} as  in the proof  of positive recurrence of a  similar Markov 
 chain in~\cite[Theorem 1, Part (1)]{SV10}.
}
\end{rmk}

The next step of the proof is to show  the convergence of the ratios in the case of a complete graph.
This is the subject of the following lemma.

\begin{lemma}[The strong law of large numbers for a growth process on a complete graph]
\label{L3}
Let $X(n)=(X_1(n),...,X_m(n))$ be a growth process with parameters $0<\alpha<\beta$ 
 on  a  complete graph  with $m\geq 1$ vertices labeled by  $1, ...,m$.
For every  initial state $X(0)=\bx\in\Z_{+}^m$ and  $\delta>0$ 
 with probability one 
 $$\sum_{i=1}^m\left|X_i(n)-\frac{n}{m}\right|>n\delta\, \text{ for finitely many }\, n.$$
In other words,  with probability one 
$\lim_{n\to \infty}\frac{X_{i}(n)}{n}=\frac{1}{m}$, $i=1,...,m$.
\end{lemma}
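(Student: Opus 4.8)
The plan is to prove Lemma~\ref{L3} by reducing the statement about the vector $X(n)$ to a statement about the difference chain $Z(n)=(Z_1(n),\dots,Z_{m-1}(n))$ with $Z_i(n)=X_i(n)-X_m(n)$, which Lemma~\ref{MC_Z} identifies as an irreducible positive recurrent Markov chain. The key observation is that the event $\sum_{i=1}^m|X_i(n)-n/m|>n\delta$ forces at least one coordinate difference $|X_i(n)-X_j(n)|$ to grow linearly in $n$: if all coordinates were within $o(n)$ of each other, they would all be within $o(n)$ of their common average, which equals $n/m$ (since $\sum_i X_i(n)=n+\sum_i x_i$). More precisely, I would first record the deterministic bound that $\sum_{i=1}^m|X_i(n)-n/m|$ is controlled by $\max_{i,j}|X_i(n)-X_j(n)|$ up to a multiplicative constant depending only on $m$ (and an additive constant from the initial state), so that the linear deviation of the $X_i$ from $n/m$ is equivalent to linear growth of some $|Z_i(n)|$.

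The main step is to upgrade positive recurrence of $Z(n)$ into the statement that $|Z_i(n)|=o(n)$ almost surely, in fact that $\{|Z_i(n)|>n\delta\}$ occurs only finitely often for every $\delta>0$. For a positive recurrent irreducible Markov chain on a countable state space, the fraction of time spent outside any finite set tends to the stationary measure of its complement, which can be made arbitrarily small; I would use the ergodic theorem for positive recurrent Markov chains to conclude that the empirical time average $\tfrac1n\#\{k\le n: \|Z(k)\|>R\}$ converges to $\pi(\{\|z\|>R\})$, which is small for large $R$. However, occupation-time control alone does not immediately give that $\|Z(n)\|$ is eventually below $n\delta$; to pass from "rarely large" to "never linearly large for large $n$" I would exploit that the stationary distribution $\pi$ of $Z(n)$ has exponential tails, which follows from the geometric Lyapunov drift implicit in Lemma~\ref{MC_Z} (the function $f(\bz)=\sum z_i^2$ has drift bounded above by $-\eps$ outside a finite set, and the one-step increments are bounded, so a standard exponential-moment Lyapunov argument gives $\Ee_\pi e^{c\|Z\|}<\infty$ for some $c>0$).

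With exponential tails in hand, the cleanest route is a Borel--Cantelli argument: I would show $\sum_n \Prob(\|Z(n)\|>n\delta)<\infty$ using the exponential tail bound together with the fact that the law of $Z(n)$ converges to $\pi$ (or is uniformly tail-dominated by a geometric law along the trajectory, again via the exponential supermartingale $e^{c f^{1/2}}$ built from the Lyapunov function). Then Borel--Cantelli yields that $\{\|Z(n)\|>n\delta\}$ occurs finitely often almost surely, and combining this with the deterministic comparison from the first paragraph gives $\sum_{i=1}^m|X_i(n)-n/m|>n\delta$ only finitely often, which is exactly the claim; dividing by $n$ then yields $X_i(n)/n\to 1/m$.

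The hard part will be justifying the passage from positive recurrence to the summable-tail estimate $\sum_n\Prob(\|Z(n)\|>n\delta)<\infty$: positive recurrence alone is not enough, so the crux is establishing uniform-in-$n$ exponential (or at least sufficiently fast) tail control on the law of $Z(n)$, uniformly over the initial state. I expect this to rest on converting the quadratic Lyapunov drift from the proof of Lemma~\ref{MC_Z} into an exponential supermartingale estimate using the boundedness of the per-step increments of $Z(n)$ (each step changes $Z$ by $\pm\be_i$ or $-\be$). An alternative that sidesteps exponential tails would be a direct large-deviation or martingale concentration bound on the increments of $X_i(n)-X_j(n)$; but since the increment probabilities depend on the current state through the $e^{-\lambda z_i}$ factors, the self-regulating drift back toward the origin is precisely what must be quantified, and doing so carefully is where the real work lies.
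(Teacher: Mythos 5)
Your reduction to the difference chain $Z(n)$ is the same first step as the paper's, and your final Borel--Cantelli skeleton would indeed close the proof \emph{if} you had the tail bounds you assume; the genuine gap is your claim that exponential tails (for the stationary law or for the laws of $Z(n)$) are ``implicit in Lemma~\ref{MC_Z}''. What that lemma proves is the drift bound $\E\left(f(Z(n+1))-f(Z(n))\,|\,Z(n)=\bz\right)\leq -\eps$ for the \emph{quadratic} function $f(\bz)=\sum_i z_i^2$ outside a finite set. With bounded increments this translates into a drift of order $-\eps/\|\bz\|$ for the norm itself, i.e.\ a Lamperti-type condition, and such a condition does \emph{not} imply geometric ergodicity or exponential tails: a nearest-neighbour chain on $\Z_{+}$ with mean drift $-c/x$ at site $x$ satisfies $\E(\Delta(x^2))\leq 1-2c\leq-\eps$ for $c>(1+\eps)/2$, yet its stationary law has polynomial tails of order $x^{-2c}$. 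So no exponential supermartingale can be extracted from the information Lemma~\ref{MC_Z} actually provides; you would have to redo the drift computation from scratch with an exponential Lyapunov function (say $V(\bz)=\sum_i e^{c|z_i|}$), exploiting the specific exponential form of the transition probabilities~\eqref{tr_prob}. That stronger statement is in fact true for this particular chain, but it is a new and nontrivial estimate, not a corollary of the lemma — and you would additionally need the resulting geometric drift inequality $\E(\Delta V)\leq-\gamma V+b$ to get the uniform-in-$n$ tail control you correctly identify as the second missing piece.

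The paper sidesteps all of this with an elementary excursion argument that uses only what Lemma~\ref{MC_Z} really gives, namely $\E(\sigma_1)<\infty$ for the return time of $Z(n)$ to the origin. Since $\sum_{i=1}^{m-1}|Z_i(n)|$ increases by at most $m-1$ per step, its maximum over the $k$-th excursion $(\sigma_k,\sigma_{k+1})$ is at most $(m-1)(\sigma_{k+1}-\sigma_k)$, while any time $n$ inside that excursion satisfies $n\geq\sigma_k\geq k$; hence the event that $\sum_i|Z_i(n)|>n\delta'$ for some $n$ in the $k$-th excursion is contained in $\left\{\sigma_{k+1}-\sigma_k\geq k\delta'/(m-1)\right\}$. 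The excursion lengths are i.i.d.\ with finite mean, so $\sum_k\P\left(\sigma_1\geq k\delta'/(m-1)\right)<\infty$ and Borel--Cantelli finishes the proof — no stationary distribution, no tail estimates, no convergence of laws is needed. If you wish to salvage your own route you must supply the exponential drift computation indicated above; otherwise the excursion argument is the efficient repair, and it is exactly where the hypothesis of positive recurrence (rather than anything stronger) earns its keep.
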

\begin{proof}[Proof of Lemma~\ref{L3}]
Note that 
 if $\sum_{i=1}^m|X_i(n)-n/m|>n\delta$, then $\sum_{i=1}^{m-1}|Z_i(n)|>n\delta/m^2$, where 
 $Z(n)$ is  the   Markov chain  defined in Lemma~\ref{MC_Z}.
Therefore, to prove the lemma it suffices to show that,  given  $\delta'>0$  with probability one, only a finite number of events 
 $\sum_{i=1}^{m-1}|Z_i(n)|>n\delta'$ occurs.

Let $\sigma_0=0$ and let $\sigma_k=\inf\left(n>\sigma_{k-1}: Z(n)={\bf 0}\right)$  for $k\geq 1$.
In other words, $\sigma_k$ is the $k$-th return time to the origin for the Markov chain $Z(n)$.
Define the following events 
\begin{equation}
\label{W}
W_{k,\delta'}:=\left\{\max_{n\in(\sigma_k, \sigma_{k+1})}\sum_{i=1}^{m-1}|Z_i(n)|>n\delta'\right\},\, k\geq 1.
\end{equation}
Note  that  $\sum_{i=1}^{m-1}|Z_i(n)|$ can increase  at most by $(m-1)$  at each time step, and, besides,  $\sigma_k\geq k$. 
This yields that  
\begin{equation}
\label{W-sigma}
W_{k,\delta'}\subseteq \left\{\sigma_{k+1}-\sigma_k\geq \frac{k\delta'}{m-1}\right\}.
\end{equation} 
Assume,  without loss of generality, that $Z(0)={\bf 0}$. 
Then random variables $\sigma_{k}-\sigma_{k-1},\, k\geq 1,$
are  identically distributed with the same distribution as  the first return $\sigma_1$.
It follows from  Lemma~\ref{MC_Z} that  $\E(\sigma_1)<\infty$.
Therefore,
\begin{align*}
\sum_{k=1}^{\infty}\P(\sigma_k-\sigma_{k-1}\geq k\delta'/(m-1)|Z(0)={\bf 0})
&= 
\sum_{k=1}^{\infty}\P(\sigma_1\geq k\delta'/(m-1)|Z(0)={\bf 0})\\
&\leq C\E(\sigma_1|Z(0)={\bf 0})<\infty,
\end{align*}
and, hence, by the Borel-Cantelli lemma, with probability one, only a finite number of events
$\{\sigma_k-\sigma_{k-1}\geq k\delta'/(m-1)\}$, $k\geq 1$, occur. Recalling equation~\eqref{W-sigma} gives that, 
with probability one, only a finite number of events $W_{k,\delta'}$ occur. Consequently, with probability one,
 $\sum_{i=1}^{m-1}|Z_i(n)|>n\delta'$ only for finitely many $n$, and the  lemma is proved.
 \end{proof}

Finally, we are going to show the 
convergence of the ratios  for the growth process  with parameters  $0<\alpha<\beta$ on  an arbitrary connected graph $G(V, E)$.
 Let $(v_1,...,v_{m})\subseteq V$ be vertices of a clique.
 Fix  $(v_{k(1)},...,v_{k(n)})\in S(n)$. A direct computation gives 
the following analogue  of  bound~\eqref{aa10}  
 \begin{equation}
\label{aa30}
\begin{split}
\Px\left(A_{1}^{v_{k(1)}}\right)&=\Px\left(A_{1}^{v_{k(1)}}\Bigl|
A_{1}^{(v_1,...v_m)}\right)\Px\left(A_{1}^{(v_1,...v_m)}\right)
\\
&\leq \Px\left(A_{1}^{v_{k(1)}}\Bigl|
A_{1}^{(v_1,...v_m)}\right)=
\frac{\Gamma_{v_{k(1)}}}{\sum_{k=1}^{m}\Gamma_{v_k}},
\end{split}
\end{equation} 
where, as before,  we denoted
$\Gamma_{v_k}=\Gamma_{v_k}(\bx)$, $k=1,...,m$. 
Similarly, we have 
 for every $j=2,\ldots, n$ that 
 \begin{equation}
\label{aa3}
\begin{split}
\Px\left(A_{j}^{v_{k(j)}}\Biggl|\bigcap_{i=1}^{j-1} A_{i}^{v_{k(i)}}\right)&=\Px\left(A_{j}^{v_{k(j)}}\Biggl|
A_{j}^{(v_1,...v_m)},\bigcap_{i=1}^{j-1} A_{i}^{v_{k(i)}}\right)\Px\left(A_{j}^{(v_1,...v_m)}\Biggl|
\bigcap_{i=1}^{j-1} A_{i}^{v_{k(i)}}\right)\\
&\leq \Px\left(A_{j}^{v_{k(j)}}\Biggl|
A_{j}^{(v_1,...v_m)},\bigcap_{i=1}^{j-1} A_{i}^{v_{k(i)}}\right)=
\frac{\Gamma_{v_{k(j)}}e^{-\lambda r_{k(j),j-1}}}{\sum_{k=1}^{m}\Gamma_{v_k}e^{-\lambda r_{k,j-1}}},
\end{split}
\end{equation} 
where   $\lambda=-(\alpha-\beta)$  and $r_{k,j-1}$, $k=1,...,m,$ are such that 
\begin{align*}
\sum_{i=1}^{j-1}\be_{v_{k(i)}}&=\sum_{k=1}^mr_{k,j-1}\be_{v_k}\, \text{ for }\, j\geq 2\, \text{ and }\,
 r_{k,0}=0.
\end{align*}
In other words, $r_{k,j-1}$ is the number of particles at vertex $k$  at time $j-1$. 
Then, it follows from equations~\eqref{aa30} and~\eqref{aa3}  that 
\begin{equation}
\label{aa4}
\Px\left(\bigcap_{i=1}^nA_i^{v_{k(i)}}\right)\leq 
\prod_{i=1}^n\frac{\Gamma_{v_{k(i)}}e^{-\lambda r_{k(i),i-1}}}{\sum_{k=1}^{m}\Gamma_{v_k}e^{-\lambda r_{k,i-1}}}.
\end{equation}
Consider a growth process $\widetilde X(n)$ with parameters $(\alpha, \beta)$  on the complete graph with vertices $1,...,m$,
whose
growth rates are computed as follows 
\begin{equation}
\label{rates_mod}
\Gamma_{i}(\widetilde \bx)=\Gamma_{v_i}e^{\alpha \tilde x_i+\beta\sum_{j\neq i}\tilde x_j},\, \widetilde\bx=(\tilde x_1,...,\tilde x_m)\in\Z_{+}^m,\end{equation}
where,   in contrast to growth rates~\eqref{rates},  additional coefficients   $\Gamma_{v_i}$ appear.
Assume that $\widetilde X(0)={\bf 0}$. Then, 
it is easy to see that 
 the right-hand side of equation~\eqref{aa4} is the probability of a trajectory  of length $n$ of the growth process $\widetilde X(n)$
 corresponding to  the sequence  $(v_{k(1)}, ..., v_{k(n)})\in S(n)$ as follows. This is a trajectory such that 
 a particle is allocated at vertex $k(i)\in (1,...,m)$ at time $i=1,..,n$.
Further,  given $\delta>0$ the following analogue of equation~\eqref{Bdelta}
holds 
\begin{equation}
\label{Bdelta1}
\Px\left(A_{[1,\infty]}^{(v_1,...,v_m)}\right)=\Px\left(B^{c}_{\delta}\bigcap A_{[1,\infty]}^{(v_1,...,v_m)}\right)+
\Px\left(B_{\delta}\bigcap  A_{[1,\infty]}^{(v_1,...,v_m)}\right),
\end{equation}
where  now 
$$B_{\delta}=\left\{\sum_{i=1}^m\left|X_{v_i}(n)-\frac{n}{m}\right|\geq\delta n \, \text{ for infinitely many } n\right\}$$
 and $B^c_{\delta}$ is, as before,   the complement of $B_{\delta}$. 
It follows from equation~\eqref{aa4} that
$$\Px\left(B_{\delta}\bigcap A_{[1,\infty]}^{(v_1,...,v_m)}\right)\leq \widetilde\P\left(\widetilde B_{\delta}\right),$$
where $\widetilde \P$ is the distribution of the growth process $\widetilde X(n)$ on the complete graph with $m$ vertices 
 (with growth rates~\eqref{rates_mod})
starting at $\widetilde X(0)={\bf 0}\in \Z_{+}^m$
and 
$$\widetilde B_{\delta}=\left\{\sum_{i=1}^m\left|\widetilde X_{v_i}(n)-\frac{n}{m}\right|\geq\delta n \, \text{ for infinitely many } n\right\}.$$
Note that both Lemma~\ref{MC_Z} and Lemma~\ref{L3} remain  true  for this  growth process 
(the proofs can be repeated  verbatim). Therefore, $\P\left(\widetilde B_{\delta}\right)=0$, and, hence, 
$\Px\left(B_{\delta}\bigcap  A_{[1,\infty]}^{(v_1,...,v_m)}\right)=0$.
This yields that 
$$\Px\left(\frac{X_{v_i}(n)}{X_{v_j}(n)}\to 1, \, \text{ as }\, n\to \infty
\Biggl|A_{[1,\infty]}^{(v_1,...,v_m)}\right)=1,$$
for  $i,j=1,...,m$,  as claimed. 

The proof of  Theorem~\ref{T1} in the case $0<\alpha<\beta$ is finished.

\section{Proof of  Theorem \ref{T2}} 
\label{proofT2}

Start with the following proposition.
\begin{prop}
\label{L4}
Let $X(n)=(X_v(n),\, v\in V)$ be a growth process  with parameters $(\alpha, \beta)$ 
on a finite connected graph $G=(V, E)$ and let $0<\beta<\alpha$.
Given state $\bx\in\Z_{+}^V$ with growth rates  $\Gamma_v(\bx),\, v\in V$, suppose that  $\Gamma_u(\bx)=\max(\Gamma_v(\bx): v\in V)$.
Then 
$\Px\left(A_{[1,\infty]}^{u}\right)\geq \eps
$
for some $\eps>0$ that depends only on $\alpha, \beta$ and $|V|$.
 In other words, with positive probability, 
all subsequent particles will be allocated at a vertex with the maximal growth rate.
\end{prop}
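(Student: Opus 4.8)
The plan is to follow the same scheme as in the proof of Lemma~\ref{L1}, but here the computation is direct and needs no auxiliary measure: I simply estimate each one-step probability of sending the next particle to $u$ and multiply. First I would write the target event as a decreasing intersection, $A_{[1,\infty]}^{u}=\bigcap_{n\geq 1}A_{[1,n]}^{u}$ with $A_{[1,n]}^{u}=\bigcap_{k=1}^{n}A_{k}^{u}$, where $A_{k}^{u}$ is the event that a particle is placed at $u$ at time $k$ (as in~\eqref{Event21}). By continuity of the measure along this decreasing sequence, $\Px(A_{[1,\infty]}^{u})=\lim_{n\to\infty}\Px(A_{[1,n]}^{u})$, and the latter factorises as a telescoping product of one-step conditional probabilities.

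Next I would estimate a single factor. On the event $A_{[1,n-1]}^{u}$ exactly $r=n-1$ particles have been added at $u$ to the initial configuration, so the process sits at $\by=\bx+r\be_u$. The crucial observation is how the rates transform: adding $r$ particles at $u$ multiplies $\Gamma_u$ by $e^{\alpha r}$, multiplies $\Gamma_v$ by $e^{\beta r}$ for every neighbour $v\sim u$, and leaves $\Gamma_v$ unchanged for every non-neighbour $v\nsim u$, $v\neq u$. Using the maximality hypothesis $\Gamma_v(\bx)\leq\Gamma_u(\bx)$ for all $v$, together with $e^{\beta r}\geq 1$ (here $\beta>0$), the whole sum over $v\neq u$ in the denominator is bounded by $(|V|-1)\Gamma_u(\bx)e^{\beta r}$, which gives
\begin{equation*}
\Px\left(A_{n}^{u}\Biggl|A_{[1,n-1]}^{u}\right)=\frac{\Gamma_u(\by)}{\Gamma(\by)}\geq\frac{e^{\alpha r}}{e^{\alpha r}+(|V|-1)e^{\beta r}}=\frac{1}{1+(|V|-1)e^{-(\alpha-\beta)r}}.
\end{equation*}

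Finally I would multiply these bounds over $r=0,1,2,\ldots$. Taking the product,
\begin{equation*}
\Px\left(A_{[1,\infty]}^{u}\right)\geq\prod_{r=0}^{\infty}\frac{1}{1+(|V|-1)e^{-(\alpha-\beta)r}}=:\eps.
\end{equation*}
The product is strictly positive precisely because $\sum_{r\geq 0}(|V|-1)e^{-(\alpha-\beta)r}<\infty$, and this is exactly where the hypothesis $\beta<\alpha$ enters: the exponent $-(\alpha-\beta)r$ is negative, so the series is geometric and summable, just as the product in~\eqref{eps} converged thanks to $\alpha>0$. The resulting $\eps$ depends only on $\alpha-\beta$ and $|V|$, hence only on $\alpha,\beta,|V|$, as required. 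I do not anticipate a serious obstacle; the only point demanding care is the justification that the infinite product stays bounded away from zero, which rests entirely on $\alpha>\beta$ and contrasts sharply with the regime $0<\alpha\leq\beta$ of Theorem~\ref{T1}, where the analogous product over a single vertex would vanish.
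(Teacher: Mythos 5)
Your proposal is correct and follows essentially the same route as the paper: the paper likewise writes $\Px\bigl(A_{[1,\infty]}^{u}\bigr)$ as the infinite product of one-step probabilities, uses $\Gamma_u(\bx+n\be_u)=\Gamma_u(\bx)e^{\alpha n}$ and $\Gamma_v(\bx+n\be_u)\leq\Gamma_v(\bx)e^{\beta n}$ together with the maximality of $\Gamma_u(\bx)$ to bound each factor below by $\bigl(1+|V|e^{-(\alpha-\beta)n}\bigr)^{-1}$, and concludes by summability of the geometric series. Your version is marginally sharper (constant $|V|-1$ instead of $|V|$, and the explicit split into neighbours $v\sim u$, where the rate gains $e^{\beta r}$, versus non-neighbours, where it is unchanged and $e^{\beta r}\geq 1$ is invoked), but this changes nothing of substance.
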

\begin{proof}[Proof of Proposition~\ref{L4}.]
The proof of the lemma is similar to the proof of Lemma 1 in~\cite{CMSV}. We provide the details  for the sake of completeness.
Note that 
$\Gamma_u(\bx+n\be_u)=\Gamma_u(\bx)e^{\alpha n}$ and 
$\Gamma_v(\bx+n\be_u)\leq \Gamma_v(\bx)e^{\beta n}$, $v\neq u$. Therefore,  using that 
$\Gamma_u(\bx)\geq \Gamma_v(\bx)$ for $v\neq u$ we obtain that 
$$\sum\limits_{v\neq u}\frac{\Gamma_v(\bx+n\be_u)}{\Gamma_u(\bx+n\be_u)}\leq |V|e^{-(\alpha-\beta)n},$$
which, in turn,  gives  that
\begin{align*}
\Px\left(A_{[1,\infty]}^{u}\right)&=\prod\limits_{n=0}^{\infty}
\frac{\Gamma_u(\bx)e^{\alpha n}}{\Gamma_u(\bx)e^{\alpha n}+\sum_{v\neq u}\Gamma_v(\bx+n\be_u)}
\geq  \prod\limits_{n=0}^{\infty}
\frac{1}{1+|V|e^{-(\alpha-\beta)n}} =:\eps>0,
\end{align*} 
as claimed.
\end{proof}
The proof of Theorem~\ref{T2} can be finished  by using the  renewal argument similarly to the
 proof of Theorem~\ref{T1}. We omit the details.

\subsection*{Acknowledgements} 

We thank Stanislav Volkov for useful comments.

\end{document}